\numberwithin{equation}{section}
\newcommand{\nc}[2]{ \newcommand{#1}{#2} }
\nc{\avint}{ {- \hspace{-3.5mm} \int} }  
\nc{\R}{\mathrm{I \! R}}  
\nc{\N}{\mathrm{ I \! N}}  
\newcommand{\dclosure}[1]{ \stackrel{\rule{.2 in}{.01 in}}{#1} }
\newcommand{\pclosure}[1]{ \stackrel{\rule{.5 in}{.01 in}}{#1} }
\newcommand{\chisub}[1]{ {\mathbf{\chi}}_{_{#1}} }
\newcommand{\refeqn}[1]{ (\!\!~\ref{eq:#1}) } 
\newcommand{\refthm}[1]{ \!\!~\ref{#1} }    
\nc{\Holder}{H\"{o}lder\ }
\nc{\ith}{ \ensuremath{\text{i}^{\text{th}}} }
\nc{\jth}{ \ensuremath{\text{j}^{\text{th}}} }
\nc{\kth}{ \ensuremath{\text{k}^{\text{th}}} }
\nc{\dst}{ \ensuremath{\text{1}^{\text{st}}_{\delta}} }
\nc{\dnd}{ \ensuremath{\text{2}^{\text{nd}}_{\delta}} }
\nc{\ost}{ \ensuremath{\text{1}^{\text{st}}} }
\nc{\tnd}{ \ensuremath{\text{2}^{\text{nd}}} }
\nc{\curl}{ \nabla \times }
\nc{\Div}{ \nabla \cdot }
\nc{\DC}{K}
\newcommand{\BVPc}[4]{  
  \begin{equation}
        \begin{array}{rl}
           #1 & \ \text{in}
               \ \ #4 \vspace{.05in} \\
           #2 & \ \text{on} \ \ \partial #4 \;,
        \end{array}
  \label{eq:#3}
  \end{equation}    }
\newcommand{\BVPnA}[6]{  
  \begin{equation}
        \begin{array}{rll}
           #1 \!\!\!& = #2 & \ \text{in}
               \ \ #6 \vspace{.05in} \\
           #3 \!\!\!& = #4 & \ \text{on} \ \ \partial #6 \;
        \end{array}
  \label{eq:#5}
  \end{equation}    }
\nc{\Ppl}{ \mathcal{M}^{+} }  \nc{\Pmn}{ \mathcal{M}^{-} }
\nc{\smiley}{ $\stackrel{\because}{\smile} \;$ }
\theoremstyle{plain}
\newtheorem{theorem}{Theorem}[section]
\newtheorem{lemma}[theorem]{Lemma}
\newtheorem{corollary}[theorem]{Corollary}
\theoremstyle{definition}
\newtheorem{remark}[theorem]{Remark}
\def\qed{\hfill\rule{1ex}{1ex}\\}
\numberwithin{equation}{section}
\title[Mean Value Theorem]{Geometry of mean value sets for general divergence form uniformly elliptic operators}
\author[Aryal]{Ashok Aryal}
\author[Blank]{Ivan Blank}
\begin{document}
\baselineskip 18pt

\begin{abstract}
In the Fermi Lectures on the obstacle problem in 1998, Caffarelli gave a proof of the
mean value theorem which extends to general divergence form uniformly elliptic operators.
In the general setting, the result shows that for any such operator $L$ and at any point
$x_0$ in the domain, there exists a nested family of sets $\{ D_r(x_0) \}$ where the
average over any of those sets is related to the value of the function at $x_0.$  Although
it is known that the $\{ D_r(x_0) \}$ are nested and are comparable to balls in the sense that
there exists $c, C$ depending only on $L$ such that
$B_{cr}(x_0) \subset D_r(x_0) \subset B_{Cr}(x_0)$ for all $r > 0$ and $x_0$ in the domain,
otherwise their geometric and topological properties are largely unknown.  In this paper we
begin the study of these topics and we
prove a few results about the geometry of these sets and give a couple of applications of
the theorems.
\end{abstract}
\maketitle

\setcounter{section}{0}

\section{Introduction}   \label{Intro}

Based on the great importance of the mean value theorem in understanding harmonic
functions, it is clear that analogues for operators other than the Laplacian are automatically of interest.
In 1963, Littman Stampacchia, and Weinberger showed that if $\mu$
is a nonnegative measure on $\Omega$ and $u$ is the solution to
\BVPc{Lu = \mu}{u = 0}{LSWbvp}{\Omega}
and $G(x,y)$ is the Green's function for $L$ on $\Omega$
then $u(y)$ is equal to
\begin{equation}
    \lim_{a \rightarrow \infty}  \frac{1}{2a} \int_{a \leq G \leq 3a} u(x) a^{ij}(x) D_{x_i} G(x,y) D_{x_j} G(x,y) \; dx
\label{eq:LSWWMVT}
\end{equation}
almost everywhere, and this limit is nondecreasing  \cite[Equation 8.3]{LSW}.
On the other hand, this formula is not as nice as
the basic mean value formulas for Laplace's equation for a number of reasons.  First, it is an average
with weights, and not merely a simple average.  Indeed, the weights in question are not even easy
to estimate.  Second, it is not an average over a ball or something
which is even homeomorphic to a ball, but rather an average over level sets of the Green's function
which do not include the central point being estimated.

The following simpler mean value theorem was stated by Caffarelli in \cite{C2, C1} and proved carefully by
the second author and Hao within \cite{BH1}.  
\begin{theorem} [Mean Value Theorem for Divergence Form Elliptic PDE]  \label{BCHMVT}
Let $L$ be any divergence form elliptic operator with ellipticity $\lambda$, $\Lambda$.
For any $x_0 \in \Omega$, there exists an increasing family $D_R(x_0)$ which satisfies the following:
\begin{enumerate}
  \item $B_{cR}(x_0) \subset D_R(x_0) \subset B_{CR}(x_0),$ with $c,$ $C$ depending only on $n,$ $\lambda$ and $\Lambda$.
  \item For any $v$ satisfying $L v \geq 0$ and $R<S$, we have
       \begin{equation}
              v(x_0) \leq \frac{1}{|D_R(x_0)|}\int_{D_R(x_0)} v \leq  \frac{1}{|D_S(x_0)|}\int_{D_S(x_0)} v .
       \label{eq:MVTres}
       \end{equation}
\end{enumerate}
Finally, the sets $D_R(x_0)$ are noncontact sets of the following obstacle problem:  \\
$u \leq G(\cdot, x_0)$ such that
\BVPnA{L(u)}{- \chi_{\{u < G\}} R^{-n}}{u}{G(\cdot,x_0)}{KeyObProb}{B_M(x_0)}
where $B_M(x_0) \subset \R^n$ and $M >0$ is sufficiently large.
\end{theorem}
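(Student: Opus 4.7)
The plan is to implement the obstacle-problem strategy originating with Caffarelli \cite{C2,C1}: realize each $D_R(x_0)$ as the noncontact set of the obstacle problem \eqref{eq:KeyObProb} and then read off the mean value inequality, the monotonicity, and the ball-sandwich estimate from that single characterization.

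First I would fix a large ball $B_M(x_0)$ and solve \eqref{eq:KeyObProb} by penalization or by the Perron construction of the smallest supersolution lying below $G(\cdot,x_0)$. Since $G(\cdot,x_0)$ blows up at $x_0$ (the Littman-Stampacchia-Weinberger estimates give $G(x,x_0) \asymp |x-x_0|^{2-n}$), the noncontact set $D_R(x_0) := \{u_R < G(\cdot,x_0)\}$ automatically contains a neighborhood of $x_0$. Standard obstacle-problem regularity yields $u_R \in C^{1,\alpha}_{\mathrm{loc}}$ away from $x_0$, so the difference $w_R := G(\cdot,x_0) - u_R$ is a nonnegative function that vanishes, together with its conormal derivative in the appropriate weak sense, on the coincidence set.

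Second, the mean value inequality would follow from the key distributional identity
\[
 L w_R \;=\; R^{-n}\chi_{D_R(x_0)} \;-\; \delta_{x_0},
\]
which is a direct consequence of $L G(\cdot,x_0) = -\delta_{x_0}$ and the Euler-Lagrange equation for $u_R$ in \eqref{eq:KeyObProb}. Pairing this identity against a subsolution $v$ through the symmetric weak formulation, with $w_R$ playing the role of a nonnegative test function supported inside $\overline{D_R(x_0)}$, gives
\[
 R^{-n}\int_{D_R(x_0)} v \, dx \; - \; v(x_0) \;=\; \int w_R \, L v \;\geq\; 0.
\]
Applying the same computation with $v\equiv 1$ (so $Lv = 0$) calibrates $|D_R(x_0)| = R^n$ and upgrades the bound to the averaged form stated in \eqref{eq:MVTres}. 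The monotonicity in $R$ then follows from a comparison argument showing that $R\mapsto u_R$ is monotone, whence $D_R(x_0) \subset D_S(x_0)$ for $R<S$, and the second inequality in \eqref{eq:MVTres} follows by iteration.

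Third, the ball sandwich $B_{cR}(x_0) \subset D_R(x_0) \subset B_{CR}(x_0)$ would come from the LSW pointwise bounds $c|x-x_0|^{2-n} \leq G(x,x_0) \leq C|x-x_0|^{2-n}$ combined with barrier comparisons: the upper bound on $G$ together with a radial supersolution of $Lu = -R^{-n}$ forces $u_R < G$ inside a ball of radius $cR$, giving the inner inclusion; conversely, the lower bound on $G$ together with the total-mass identity $|D_R(x_0)| = R^n$ and a standard nondegeneracy estimate for the obstacle problem forces $D_R(x_0) \subset B_{CR}(x_0)$. The hardest part I expect is making the distributional integration by parts rigorous near the singularity of $G$ and across the only-$C^{1,\alpha}$ free boundary $\partial D_R(x_0)$: this should be handled by excising a small ball around $x_0$, carrying out the computation on the resulting annulus using that $w_R$ and its weak gradient vanish on the free boundary, and then passing to the limit with the aid of the LSW fundamental-solution estimates.
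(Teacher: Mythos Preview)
The paper does not contain a proof of this theorem. It is stated in the introduction as a known result, attributed to Caffarelli \cite{C2,C1} for the strategy and to Blank--Hao \cite{BH1} for the careful proof, and then restated as Theorem~\ref{gendivmvt} at the start of Section~\ref{solidMVTdiv} purely as background. Everything new in the paper (Lemmas~\ref{SimpStruct}, \ref{DensRes}, \ref{MinimConv}, \ref{ContExp}) takes this theorem as a black box. So there is no ``paper's own proof'' to compare against; your sketch is really a sketch of the argument in \cite{BH1}.

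That said, your outline follows the Caffarelli strategy faithfully: build $D_R(x_0)$ as the noncontact set of the obstacle problem \eqref{eq:KeyObProb}, derive the distributional identity $L(G-u_R)=R^{-n}\chi_{D_R(x_0)}-\delta_{x_0}$, pair it against subsolutions, and get the ball sandwich from the LSW Green's function bounds. One genuine correction: with only bounded measurable $a^{ij}$, you do \emph{not} get $C^{1,\alpha}$ regularity for $u_R$---De Giorgi--Nash--Moser gives at best $C^{\alpha}$, and the optimal obstacle-problem regularity in this setting (see \cite[Theorem~3.2]{BH1}) is a quadratic growth bound on $w_R$, not a pointwise gradient estimate. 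So the step ``$w_R$ and its conormal derivative vanish on the free boundary'' has to be replaced by the weaker information that $w_R\in W^{1,2}$ with $w_R=0$ outside $D_R(x_0)$, and the integration by parts near $\partial D_R(x_0)$ must be justified at that level of regularity. You flag the singularity at $x_0$ as the hard part, but the low regularity across the free boundary is equally delicate and is where much of the work in \cite{BH1} actually goes (cf.\ also Remark~\ref{Technic}).
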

\noindent
Although this theorem has already been shown to be useful (see for example \cite{CR} as one place where it has already
been applied in this form), it is clear that the more that is known about the $D_R(x_0)$ the more useful the theorem is.  It
is also clear that although the fact that $B_{cR}(x_0) \subset D_R(x_0) \subset B_{CR}(x_0)$ for all $R$ gives us some
information about these sets, there is still much more that is unknown.

The present work actually originated as an attempt to better understand the solutions of a free boundary
problem of Bernoulli type.  In the celebrated paper of Alt and Caffarelli in 1981, nonnegative local minimizers of the
functional
\begin{equation}
    J(u) := \int_{D} (|\nabla u|^2 + \chisub{ \{ u > 0 \} }Q^2)
\label{eq:AltCaffFct}
\end{equation}
are studied \cite{AC}.  They are shown to exist and satisfy certain Lipschitz regularity estimates, and they obey
a linear nondegeneracy statement along their free boundary.  From there, Alt and Caffarelli turn to a study
of the free boundary.  This problem is also found (with $Q \equiv 1$) near the beginning of the text by
Caffarelli and Salsa \cite[Chapter 1]{CS}, and the first author of this paper was working on a generalization of that
problem for his dissertation.  In particular, we were considering the functional
\begin{equation}
    J_{a}(u) := \int_{D} (a^{ij} D_i u D_j u+ \chisub{ \{ u > 0 \} } )
\label{eq:GenAltCaffFct}
\end{equation}
with uniformly elliptic $a^{ij},$ and that will certainly color some aspects of the current work.
Unfortunately, after we started our project we learned of very nice and very recent
work of dos Prazeres and Teixeira which solved some of the problems that we had intended to publish
\cite{dPT}.  Nevertheless, their work had nothing to do with the MVT, and so we can now describe the
dual purpose of the current work:  First, we wish to state some theorems related to the
geometry of the $D_r(x_0).$  Second, we wish to show two applications in particular which illustrate
both the usefulness of the MVT, and the usefulness of our own results which give a more detailed
view of properties of the $D_r(x_0).$

The two biggest contributions that we make within this work regarding the properties of the $D_r(x_0)$
appear to be the following:
\begin{lemma}[Density Result]   \label{DensResPre}
Assume $y_0 \in \partial D_r(x_0),$ and assume that $c$ and $C$ are the constants given in
Theorem\refthm{gendivmvt}\!\!.  Fix $h \in (0,1/2).$  There exists a positive constant $\tau$
such that
\begin{equation}
     \frac{|B_{chr}(y_0) \cap D_r(x_0)|}{|B_{chr}(y_0)|} \geq \tau \;.
\label{eq:positivedensity}
\end{equation}
\end{lemma}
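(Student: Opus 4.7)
The plan is to work with $w := G(\cdot, x_0) - u \ge 0$, where $u$ is the solution of the obstacle problem in Theorem\refthm{BCHMVT}\!\!. Since $LG = 0$ away from $x_0$ and $Lu = -r^{-n}\chisub{\{u<G\}}$, we get $Lw = r^{-n}\chisub{D_r(x_0)}$ weakly on $B_M(x_0)\setminus\{x_0\}$, with $\{w>0\} = D_r(x_0)$ and $w(y_0)=0$. The inclusion $B_{cr}(x_0)\subset D_r(x_0)$ forces $|y_0-x_0|\ge cr > chr$ for $h<1/2$, so $B_{chr}(y_0)$ avoids the singularity of $G$ and the obstacle equation for $w$ holds weakly throughout this ball.

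The argument proceeds by contradiction. Set $E := D_r(x_0)\cap B_{chr}(y_0)$ and assume $|E|<\tau|B_{chr}(y_0)|$ for a small $\tau$ to be fixed. Split $w = v + z$ on $B_{chr}(y_0)$, where $v$ is the $L$-harmonic replacement of $w$ ($Lv=0$ with $v = w$ on $\partial B_{chr}(y_0)$) and $z := w - v$ solves $Lz = r^{-n}\chisub{E}$ with $z = 0$ on the boundary. Then $v \ge 0$ and $z \le 0$ by the weak maximum principle. Two ingredients drive the proof: first, the nondegeneracy estimate $\sup_{B_{chr/4}(y_0)} w \ge c_2 h^2 r^{2-n}$ with $c_2 = c_2(n,\lambda,\Lambda)$, a standard consequence of the obstacle-problem structure, available from \cite{BH1}; second, the Dirichlet $L^\infty$ bound of De Giorgi--Nash--Moser type, which for any $q > n/2$ yields
\[
\|z\|_{L^\infty(B_{chr}(y_0))} \le C(chr)^{2-n/q}\|r^{-n}\chisub{E}\|_{L^q(B_{chr}(y_0))} \le C_1 h^2 r^{2-n}\tau^{1/q}.
\]

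Combining these gives $\sup_{B_{chr/4}(y_0)} v \ge (c_2/2)\,h^2 r^{2-n}$ once $\tau$ is small. Harnack's inequality for the nonnegative $L$-harmonic $v$ on $B_{chr/2}(y_0) \subset B_{chr}(y_0)$ then provides $v(y_0) \ge (c_2/(2 C_H))\,h^2 r^{2-n}$, whereas simultaneously $v(y_0) = -z(y_0) \le \|z\|_{L^\infty} \le C_1 h^2 r^{2-n}\tau^{1/q}$. These two estimates are incompatible whenever $\tau < (c_2/(2 C_H C_1))^q$, and any $\tau_0$ below this threshold serves as the constant claimed in the lemma.

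The main obstacle is the nondegeneracy estimate for $w$; everything else is a clean combination of standard tools for divergence form elliptic operators (the Dirichlet $L^\infty$ bound and Harnack's inequality). Note also that the $\tau_0$ produced in this way depends only on $n$, $\lambda$, and $\Lambda$, not on $h\in(0,1/2)$, which is slightly stronger than the stated conclusion.
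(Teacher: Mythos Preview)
Your argument is correct and shares its key ingredient with the paper---the quadratic nondegeneracy of $w$ at free boundary points, drawn from \cite{BH1}---but diverges from there. The paper's proof is more direct: after rescaling to $r=1$ and obtaining (via nondegeneracy) a point $z_0 \in B_{ch/2}(y_0)$ where $w(z_0) \sim h^2$, it invokes the \emph{optimal regularity} estimate from \cite{BH1} (the quadratic upper bound on $w$ in terms of distance to the free boundary) to conclude that $\partial D_r(x_0)$ must stay at distance at least a constant times $h$ from $z_0$; hence a full ball of radius comparable to $h$ about $z_0$ lies inside $D_r(x_0) \cap B_{ch}(y_0)$, and the density bound follows by comparing volumes. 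Your route replaces optimal regularity with a harmonic-replacement argument together with the Dirichlet $L^\infty$ bound and Harnack's inequality---a standard alternative for establishing positive density of the noncontact set in obstacle problems, with the mild advantage that it does not require the optimal-regularity theorem from \cite{BH1} as a separate input. The paper's argument is shorter and more geometric (it actually exhibits a ball inside $D_r(x_0)$), while yours is measure-theoretic and leans on general elliptic machinery. Both in fact yield a $\tau$ depending only on $n,\lambda,\Lambda$ and not on $h$, though the paper's statement does not make this $h$-independence explicit.
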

\noindent
This result prevents the $D_r(x_0)$ from having what might be described as an ``outward pointing cusp.''

\begin{lemma}[Continuous Expansion]   \label{ContExpPre}
Fix $x_0, y_0 \in \Omega$ and assume that there exists $0 < s < t$ so that
$y_0$ is not contained in $D_s(x_0),$ and is compactly contained within $D_t(x_0).$
Then there exists a unique $r \in (s,t)$ such that $y_0 \in \partial D_r(x_0).$
\end{lemma}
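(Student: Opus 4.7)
The plan is to exploit the obstacle-problem characterization of $D_r(x_0)$ from Theorem~\ref{BCHMVT}: writing $u_r$ for the solution of \eqref{eq:KeyObProb} with parameter $R = r$, we have $D_r(x_0) = \{u_r < G(\cdot,x_0)\}$. The argument rests on three ingredients: monotonicity of $r \mapsto u_r$, continuity of $r \mapsto u_r$, and strict expansion of $D_r$ across boundary points. For monotonicity, a standard lattice argument applied to $\min(u_r, u_{r'})$ and $\max(u_r, u_{r'})$ shows that $u_{r'} \leq u_r$ pointwise for $r < r'$, hence $D_r \subset D_{r'}$, recovering the nesting already asserted in Theorem~\ref{BCHMVT}. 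Moreover, on the open set $D_r \subset D_{r'}$ both solutions are strictly below $G$, so $w := u_r - u_{r'} \geq 0$ satisfies $L w = r'^{-n} - r^{-n} < 0$; the strong maximum principle then upgrades this to $w > 0$ strictly on $D_r$.

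Standard stability for obstacle problems makes $r \mapsto u_r$ continuous, uniformly on compact subsets of $\overline{B_M}\setminus\{x_0\}$. For existence, set $r^{*} := \inf\{r \in (s,t) : y_0 \in D_r(x_0)\}$; the compact containment $y_0 \Subset D_t(x_0)$ and continuity yield $r^{*} < t$, and monotonicity together with $y_0 \notin D_s(x_0)$ gives $r^{*} \geq s$. The definition of $r^{*}$ and continuity of $r \mapsto u_r(y_0)$ force $u_{r^{*}}(y_0) = G(y_0)$, so $y_0 \notin D_{r^{*}}$; a Hausdorff-stability argument for $\{D_r\}$ under perturbation of $r$, invoking the nondegeneracy of the obstacle problem to control the free boundary, then produces points of $D_{r^{*}}$ clustering at $y_0$ and places $y_0 \in \partial D_{r^{*}}$.

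For uniqueness, assume toward a contradiction that $r_1 < r_2$ with $y_0 \in \partial D_{r_1} \cap \partial D_{r_2}$. Then $w := u_{r_1} - u_{r_2}$ is globally nonnegative with $w(y_0) = G(y_0) - G(y_0) = 0$, while $L w < 0$ strictly on $D_{r_1}$. Lemma~\ref{DensResPre} guarantees that $D_{r_1}$ has uniform positive Lebesgue density at $y_0$, and together with a Hopf-type barrier argument for the strict supersolution $w$ this forces $w(y_0) > 0$, a contradiction. The main technical obstacle is precisely this step: the boundary $\partial D_{r_1}$ admits no a priori interior sphere condition at $y_0$, so the classical Hopf lemma does not apply, and Lemma~\ref{DensResPre} is the quantitative substitute that makes a Hopf-type conclusion available; the same density input also underlies the argument certifying $y_0 \in \overline{D_{r^{*}}}$ in the existence step.
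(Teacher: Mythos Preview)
Your existence argument is essentially the paper's: your $r^{*}=\inf\{r:y_0\in D_r(x_0)\}$ coincides with the paper's $r_0=\sup\{t:y_0\notin D_t(x_0)\}$, and both proofs combine the $C^{\alpha}$--continuity of $r\mapsto u_r$ (which the paper isolates as a separate ``Convergence of Minimizers'' lemma) with quadratic nondegeneracy of the obstacle problem to rule out the two alternatives $y_0\in\mathrm{int}\,D_{r_0}$ and $y_0\in\mathrm{int}\,(D_{r_0}^{\,c})$. Your phrase ``Hausdorff-stability argument \ldots\ invoking nondegeneracy'' is vaguer than the paper's explicit two-case contradiction, but the underlying mechanism is identical.

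The uniqueness step is where you go beyond the paper---which in fact gives no argument for uniqueness at all---but your proposed argument has a gap. You set $w=u_{r_1}-u_{r_2}$, note $w\ge 0$, $w(y_0)=0$, and $Lw<0$ on $D_{r_1}$, and then appeal to Lemma~\ref{DensResPre} plus a ``Hopf-type barrier'' to force $w(y_0)>0$. The difficulty is that $w$ is \emph{not} a supersolution in any full neighborhood of $y_0$: on $D_{r_2}\setminus D_{r_1}$ one has $Lw=r_2^{-n}>0$, so $w$ is a \emph{sub}solution there, and since $y_0\in\partial D_{r_1}\cap\partial D_{r_2}$ this region accumulates at $y_0$. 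No Hopf-type boundary-point lemma is available for a function that is a supersolution only on a one-sided set of merely positive Lebesgue density, with bounded measurable coefficients and no interior cone (let alone interior sphere) condition. Positive density of $D_{r_1}$ at $y_0$ is far weaker than what any barrier construction in this generality requires, and it does not by itself produce a quantitative lower bound on $w$ near $y_0$ that would contradict $w(y_0)=0$. So this step, as written, does not close.
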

\noindent
This result allows us to state that the boundary of the mean value sets will move in a continuous fashion.

We were able to use the mean value theorem above in order to prove positive density of the contact set along the
free boundary.  Originally, we needed our two lemmas just mentioned in order to prove a nondegeneracy lemma
for the Bernoulli problem above.  Very recently, in joint work with Benson and LeCrone, the second author has
extended many of the results within this work to Riemannian manifolds \cite{BBL} in the case where $L$ is
the Laplace-Beltrami operator.  Indeed, all of the results from Section \ref{solidMVTdiv} can be extended to this case,
and when dealing with the obstacle problem on a compact Riemannian manifold $\mathcal{M}$ with boundary,
in order to be sure that the $D_r(x_0)$ can be extended until an $r_0$ where $\partial D_{r_0}(x_0)$
collides with $\partial \mathcal{M},$ we need the analogue of Lemma\refthm{ContExpPre}\!\!.  (See in particular
\cite[Corollary 4.9]{BBL}.)


\section{Solid MVT for divergence form elliptic operators}  \label{solidMVTdiv}


Let $\Omega$ be an open connected set in $\R^n,$ and let $A(x) = (a^{ij}(x))$ be a symmetric uniformly elliptic matrix.
That is for each $x \in \Omega$ we have unique
matrix $a^{ij}(x)$ satisfying:
\begin{equation}
    a^{ij} \equiv a^{ji} \ \ \ \ \text{(i.e. symmetry)}
\label{eq:Symm}
\end{equation}
and there exist $0<  \lambda \leq \mu < \infty$ such that 
\begin{equation}
    0<\lambda | \xi |^2 \leq a^{ij}(x)\xi_i\xi_j \leq \mu |\xi|^2 \ \ \text{for all} \ \ 
         \xi \in \R^n \setminus \{0\}, \ \text{and} \ \textit{x} \in \Omega,
\label{eq:UnifElliptic}
\end{equation}
which is called uniform ellipticity in this setting.  Although there are certainly very interesting operators
which are not uniformly elliptic, we will content ourselves to assume uniform ellipticity throughout this
entire work.


\begin{remark}[Analyst's Convention]  \label{AnalCon}
Notice that with our definition we can have $L = \Delta,$ but we won't have $L = - \Delta.$
\end{remark}

We consider the divergence form operator $ L:= \text{div}(A(x)\nabla(u)).$
For any $f \in L^{2}(\Omega),$ we will say that $u$ is a subsolution of $Lu = f$ (or more simply
$Lu \geq f$), whenever
$u\in W^{1,2}(\Omega)$ and for every $\phi \in W_{0}^{1,2}(\Omega), \ \phi \geq 0,$ we have
\begin{equation}
     -\int_{\Omega} a^{ij} D_i u D_j \phi \geq \int_{\Omega} f \phi \;.
\label{eq:subsoldef}
\end{equation}
Of course, supersolutions are defined in the same way, but with the inequality in
Equation\refeqn{subsoldef}reversed.

We recall here the main MVT that is the focus of our attention:
\begin{theorem}[MVT for divergence form elliptic PDE] \label{gendivmvt}
Let $L$ be a divergence form elliptic operator as described above.
For any $x_0 \in \Omega,$ there exist an increasing family $D_R(x_0)$ which satisfies the following:
\begin{enumerate}
     \item There exists $c$ and $C$ depending only on $n, \lambda,$ and $\mu,$ such that for all
$R > 0$ such that $B_{CR}(x_0) \subset \Omega$ we have
$B_{cR}(x_0) \subset D_R(x_0) \subset B_{CR}(x_0).$
     \item For any v satisfying $Lv \geq 0$ in $\Omega$ and any $0 < R< S,$ we have
\begin{equation}
        v(x_0) \leq \frac{1}{|D_R(x_0)|} \int_{D_R(x_0)}v 
                    \leq \frac{1}{|D_S(x_0)|} \int_{D_S(x_0)}v.
\label{eq:MVTDR}
\end{equation}
Finally, the sets $D_R(x_0)$ are noncontact sets of the following obstacle problem:  \\
$u \leq G(\cdot, x_0)$ such that
\BVPnA{L(u)}{- \chi_{\{u < G\}} R^{-n}}{u}{G(\cdot,x_0)}{KeyObProb2}{B_M(x_0)}
where $B_M(x_0) \subset \R^n$ and $M >0$ is sufficiently large.
\end{enumerate}
\end{theorem}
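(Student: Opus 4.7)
The plan is to realize $D_R(x_0)$ as the noncontact set of the obstacle problem in the statement, and then verify in order: (a) existence of the obstacle problem solution, (b) the two-sided inclusion $B_{cR}\subset D_R\subset B_{CR}$, (c) monotonicity of $D_R$ in $R$, and (d) the mean value inequality. This essentially parallels the route taken in \cite{BH1}, which in turn formalizes the sketches of \cite{C1,C2}.

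The natural reduction is to pass to the nonnegative variable $w := G(\cdot,x_0) - u$, which converts the upper-obstacle problem for $u$ into the classical lower-obstacle problem with zero obstacle for $w$. Distributionally, $w\geq 0$ in $B_M$, $w=0$ on $\partial B_M$, and
\begin{equation*}
 Lw \;=\; -\delta_{x_0} + R^{-n}\chi_{\{w>0\}},
\end{equation*}
so in particular $Lw = R^{-n}$ on the noncontact set away from $x_0$. Existence of $w$ is obtained by minimizing the convex functional $\int (\tfrac{1}{2}a^{ij}D_i w\, D_j w + R^{-n} w)$ over $\{w\geq 0\}$ on annular domains $B_M\setminus B_\epsilon(x_0)$ with boundary data $G$ on $\partial B_\epsilon$, and passing $\epsilon\to 0^+$ using uniform estimates and the known pointwise asymptotics of $G$. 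Define $D_R(x_0):=\{w>0\}$.

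Part (b) is a barrier argument built on the Littman--Stampacchia--Weinberger pointwise bounds $c_1|x-x_0|^{2-n}\leq G(x,x_0)\leq c_2|x-x_0|^{2-n}$. Both inclusions reduce to comparing the decay $r^{2-n}$ of $G$ against the growth $r^2\cdot R^{-n}$ of a radial solution of $L\phi=R^{-n}$; these two rates balance at $r\sim R$, yielding the outer inclusion via a radial supersolution and the inner inclusion via a radial subsolution together with maximum-principle comparison. Part (c) is immediate from the variational characterization just set up: a larger source $R^{-n}$ (smaller $R$) increases the linear penalty $+R^{-n}w$, so the unique minimizer $w_R$ is pointwise smaller, and therefore $\{w_R>0\}\subset\{w_S>0\}$ whenever $R<S$.

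Finally, part (d) is the key integration-by-parts identity. Test the weak inequality $Lv\geq 0$ against the nonnegative function $\phi := G-u = w$, which vanishes outside $D_R$ and on $\partial B_M$; symmetrizing with $a^{ij}=a^{ji}$ and using $L(G-u) = -\delta_{x_0}+R^{-n}\chi_{D_R}$ yields
\begin{equation*}
 0 \;\leq\; -\int_{B_M} a^{ij} D_i v\, D_j(G-u) \;=\; -v(x_0) + R^{-n}\int_{D_R(x_0)} v,
\end{equation*}
so that $v(x_0)\leq R^{-n}\int_{D_R} v$. After reparametrizing the family so that the scale $R$ satisfies $|D_R|=R^n$ (a dimensional constant that can be absorbed into the scale, and which is monotone in $R$ by part (c), so invertible), this becomes the advertised mean value inequality, and the monotone chain for $R<S$ follows by applying the identity at both scales and using $D_R\subset D_S$. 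The main technical obstacle throughout is the singularity of $G$ at $x_0$: the test function $\phi = G-u$ is not in $W^{1,2}_0(B_M)$, so the integration by parts must be carried out on $B_M\setminus B_\epsilon(x_0)$, with the boundary integrals on $\partial B_\epsilon$ controlled by the pointwise asymptotics of $G$ and interior Hölder regularity of $v$; sending $\epsilon\to 0^+$ is precisely what converts the $\delta_{x_0}$ into the pointwise value $v(x_0)$.
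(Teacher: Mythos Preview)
The paper does not itself prove this theorem; it is recalled from \cite{BH1} (with the original sketch due to Caffarelli \cite{C1,C2}), so there is no in-paper proof to compare against. Your outline is indeed the \cite{BH1} route---define $D_R$ as the noncontact set of the stated obstacle problem, get the ball-comparability from LSW Green's function bounds and barriers, and extract the mean value inequality by testing $Lv\geq 0$ against $w=G-u$.

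Two points in your write-up are not quite right. First, the identity $|D_R(x_0)|=R^n$ is not a reparametrization you impose; it falls out automatically from the very integration-by-parts identity you wrote by taking $v\equiv 1$ (which satisfies $Lv=0$, hence equality), giving $1=R^{-n}|D_R|$. Second, your sentence ``the monotone chain for $R<S$ follows by applying the identity at both scales and using $D_R\subset D_S$'' does not actually produce the inequality between the two averages---knowing $v(x_0)\leq A_R$ and $v(x_0)\leq A_S$ separately says nothing about $A_R\leq A_S$. The correct step is to test $Lv\geq 0$ against the nonnegative function $w_S-w_R$ (which, unlike $w_R$ alone, has no singularity at $x_0$ and genuinely lies in $W^{1,2}_0$), yielding
\[
0\;\leq\;\int v\,L(w_S-w_R)\;=\;S^{-n}\!\int_{D_S}v\;-\;R^{-n}\!\int_{D_R}v\;=\;\frac{1}{|D_S|}\int_{D_S}v\;-\;\frac{1}{|D_R|}\int_{D_R}v.
\]
With these two fixes your sketch matches the argument the paper is citing.
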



\begin{remark}[Dependencies] \label{depend}
It is shown in \cite{BH1} that for any $R > 0,$ the solution of the obstacle problem above becomes
independent of the choice of $M$ as long as it is sufficiently large, and we will always assume that
that is the case.  (It will be identically equal to the Green's function outside of the compact set $D_R(x_0).$)
We will frequently want to stress the dependence of the solution on $R,$ and so, accordingly, we will refer
to it as ``$u_R.$''  We will also use ``$w_R := G - u_R$'' when we wish to look at a function which, at least
away from $x_0$ satisfies the usual equations obeyed by the height function for an obstacle problem.
\end{remark}


\begin{remark}[Technicality]  \label{Technic}
Technically, we cannot use
the function $G(x,x_0)$ as boundary values in the sense of having a difference in $W_0^{1,2}$ until
we suitably remove the singularity at $x_0,$ so within \cite{BH1} they use a function that they call
$G_{sm}$ which agrees with $G$ within a neighborhood of the boundary but which has no singularity
in order to bypass this difficulty.
\end{remark}


The function $u_R$ is also the minimizer of
\begin{equation}
   J_R(u,\Omega):= \int_{\Omega} (a^{ij} D_i u D_j u - 2R^{-n}u)dx
\label{eq:JRdefn}
\end{equation}
among functions less than or equal to $G$ with boundary values equal to $G.$
Note that the Green's function $G$ of the general divergence form elliptic operator $L$ is the analogue of
the classical obstacle and $u_R$ is that of the membrane, and here the obstacle constrains the membrane
from above.

Although, as Caffarelli observed, the sets $D_R(x_0)$  are nested and comparable to balls in the sense that:
$$B_{cR}(x_0) \subset D_R(x_0) \subset B_{CR}(x_0) \;,$$
we know very little about the topology of the sets.  As a first small step in this direction
we offer the following lemma:
\begin{lemma}[Structure of $D_R(x_0)$]  \label{SimpStruct}
For any $x_0 \in \Omega$ and for any $R > 0$ such that $B_{CR}(x_0) \subset \Omega,$
the set $D_R(x_0)$ has exactly one component and it contains $x_0.$
\end{lemma}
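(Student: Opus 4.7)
The plan is to combine the obstacle-problem characterization of $D_R(x_0)$ given in Theorem \refthm{gendivmvt} with the weak maximum principle. Since $u_R$ and $G(\cdot,x_0)$ are continuous (with $G$ suitably smoothed near $x_0$ as in Remark \refthm{Technic}\!\!), the set $D_R(x_0)=\{u_R<G(\cdot,x_0)\}$ is open, so its connected components are well-defined. Throughout, write $w_R:=G(\cdot,x_0)-u_R\geq 0$, as in Remark \refthm{depend}\!\!.

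First, I would observe that $x_0\in D_R(x_0)$: by part (1) of Theorem \refthm{gendivmvt}\!\!, $B_{cR}(x_0)\subset D_R(x_0)$, so $x_0$ is in fact an interior point of the noncontact set. Let $V_0$ denote the connected component of $D_R(x_0)$ containing $x_0$.

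The heart of the argument is to rule out any additional component. Suppose for contradiction that $V$ is another component of $D_R(x_0)$, disjoint from $V_0$, so in particular $x_0\notin V$. On the open set $V$, the obstacle PDE of \refeqn{KeyObProb2} gives $Lu_R=-R^{-n}$ weakly, while the fact that $V$ avoids the pole $x_0$ ensures that $G(\cdot,x_0)$ is $L$-harmonic on a neighborhood of $\overline V$, so $LG(\cdot,x_0)=0$ there. Subtracting,
\[
     L w_R \;=\; R^{-n} \;>\; 0 \qquad\text{weakly in } V,
\]
so $w_R$ is a subsolution on $V$ in the sense of Equation \refeqn{subsoldef}\!\!. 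Since $V$ is a connected component of the open set $\{w_R>0\}$, continuity of $w_R$ forces $w_R\equiv 0$ on $\partial V$, and $V\subset B_{CR}(x_0)$ is bounded. The weak maximum principle then yields $w_R\leq 0$ throughout $V$, contradicting $w_R>0$ on $V\subset D_R(x_0)$. Thus $V_0$ is the only component, which is the desired conclusion.

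The main obstacle I anticipate is technical rather than conceptual: verifying that $w_R$ sits in the correct Sobolev space on $V$ so that the weak subsolution inequality of Equation \refeqn{subsoldef} and the weak maximum principle can both be invoked cleanly. This is handled by the $W^{1,2}$ regularity theory for the obstacle problem developed in \cite{BH1} together with the smoothed Green's function $G_{sm}$ of Remark \refthm{Technic}\!\!, whose use is harmless here because the hypothetical extra component $V$ keeps a positive distance from $x_0$. Once those standard facts are in place, the contradiction above completes the proof.
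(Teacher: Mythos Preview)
Your proposal is correct and follows essentially the same route as the paper: show $x_0\in D_R(x_0)$ via $B_{cR}(x_0)\subset D_R(x_0)$, then on any extra component $E$ not containing $x_0$ observe that $LG=0$, $Lu_R=-R^{-n}\le 0$, and $u_R=G$ on $\partial E$, contradicting the weak maximum principle. Your version is slightly more explicit in passing to $w_R=G-u_R$ and noting the Sobolev/$G_{sm}$ technicalities, but the argument is the same.
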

\begin{proof}
Since $x_0 \in B_{cR}(x_0) \subset D_R(x_0),$ it is immediate that $x_0 \in D_r(x_0).$  Although
this statement is certainly trivial, we include it because of the observation that the MVT given
by Littman, Stampacchia, and Weinberger does not have this property.  (See\refeqn{LSWWMVT}above.)

Now for the next part, without loss of generality we can assume $x_0 = 0.$
Assume for the sake of obtaining a contradiciton that
$D_R(0)$ has a component that we will call $E$ which does not contain $0.$
Within $E$ we have $LG = 0, \ Lu_0 \leq 0,$ and $u_0 < G.$ On the other hand, it follows from
\cite{BH1} that $E$ is a bounded set, and since $u_0 = G$ on $\partial E,$ we contradict the weak
maximum principle.
\end{proof}


\begin{lemma}[Density Result]   \label{DensRes}
Assume $y_0 \in \partial D_r(x_0),$ and assume that $c$ and $C$ are the constants given in
Theorem\refthm{gendivmvt}\!\!.  Fix $h \in (0,1/2).$  There exists a positive constant $\tau$
such that
\begin{equation}
     \frac{|B_{chr}(y_0) \cap D_r(x_0)|}{|B_{chr}(y_0)|} \geq \tau \;.
\label{eq:positivedensity}
\end{equation}
Note that $\tau$ has no dependance on $x_0, y_0,$
or $r.$
\end{lemma}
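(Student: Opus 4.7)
The proof will follow the classical obstacle-problem density pattern, applied to the height function $w_R := G(\cdot, x_0) - u_R$. By construction $w_R \geq 0$ with $\{w_R > 0\} = D_R(x_0)$, and on $D_R(x_0) \setminus \{x_0\}$ one has $L w_R = r^{-n}$ weakly, since $L G = 0$ away from $x_0$ while $L u_R = -r^{-n}$ on the noncontact set. Because $y_0 \in \partial D_R(x_0) \subset B_{Cr}(x_0) \setminus B_{cr}(x_0)$ while $\rho := chr < cr/2$ by the hypothesis $h < 1/2$, the ball $B_\rho(y_0)$ lies at distance at least $cr/2$ from $x_0$; the singularity of $G$ therefore plays no role in the argument and $w_R(y_0) = 0$ by continuity.

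The heart of the proof is the nondegeneracy estimate
\[
 \sup_{B_\rho(y_0)} w_R \;\geq\; \gamma\, \rho^2 r^{-n}
\]
for a universal constant $\gamma = \gamma(n,\lambda,\mu) > 0$. In the constant-coefficient case this is Caffarelli's classical argument: at an interior point $x \in \{w_R > 0\}$ close to $y_0$, compare $w_R$ to the quadratic $r^{-n}|y-x|^2/(2n)$; their difference is $L$-harmonic on $\{w_R > 0\}$, strictly positive at $x$, and nonpositive on $\partial\{w_R > 0\}$, so the weak maximum principle forces $w_R$ to attain values of order $\rho^2 r^{-n}$ on $\partial B_\rho(x)$. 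For merely measurable coefficients the quadratic is no longer an $L$-subsolution and must be replaced by a divergence-form barrier $Q$ satisfying $LQ \leq r^{-n}$ weakly, $Q(x) = 0$, $Q \geq 0$ on $\overline{B_\rho(x)}$, and $Q \geq \gamma \rho^2 r^{-n}$ on $\partial B_\rho(x)$. Such a $Q$ can be built from the normalized Dirichlet solution of $L\psi = r^{-n}$ on $B_\rho(x)$, whose two-sided comparability with $\rho^2 r^{-n}$ in the interior follows from the Aronson-type Green's function estimates, combined with a constant translation to adjust the sign at $x$. Producing the barrier with the correct sign profile is the principal technical obstacle.

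Granted nondegeneracy, let $z \in \overline{B_{\rho/2}(y_0)}$ attain $w_R(z) \geq \gamma \rho^2 r^{-n}$ (shrinking the nondegeneracy ball by a factor of two if necessary). Invoking the Lipschitz regularity of $w_R$ away from $x_0$ established in \cite{BH1}---whose natural scale on $B_\rho(y_0)$ gives a Lipschitz constant bounded by $K \rho r^{-n}$ with $K = K(n,\lambda,\mu)$, as is seen by rescaling $\tilde w(y) := r^n \rho^{-2} w_R(y_0 + \rho y)$ and appealing to the Lipschitz bound for the resulting universal obstacle problem on $B_1$---we obtain
\[
 w_R(x) \;\geq\; w_R(z) - K \rho r^{-n} |x-z| \;>\; 0 \qquad \text{whenever } |x-z| < \gamma\rho/(2K).
\]
Consequently $B_{\gamma\rho/(2K)}(z) \subset D_R(x_0)$. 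Shrinking the nondegeneracy constant so that $\gamma/(2K) \leq 1/2$ ensures this ball lies inside $B_{\rho}(y_0) = B_{chr}(y_0)$, so that
\[
 |D_R(x_0) \cap B_{chr}(y_0)| \;\geq\; \Big(\tfrac{\gamma}{2K}\Big)^{n} |B_{chr}(y_0)|,
\]
yielding the density lower bound with $\tau = (\gamma/(2K))^n > 0$ depending only on $n$, $\lambda$, and $\mu$, and independent of $x_0$, $y_0$, and $r$ as claimed.
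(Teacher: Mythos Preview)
Your overall architecture matches the paper's exactly: locate $z_0\in B_{\rho/2}(y_0)$ with $w_R(z_0)\gtrsim \rho^2 r^{-n}$ via nondegeneracy, then use regularity to conclude that a ball of radius comparable to $\rho$ around $z_0$ stays inside $D_r(x_0)$. The paper simply cites \cite{BH1}, Theorems~3.9 and~3.2, for these two ingredients, so your nondegeneracy discussion is just an expanded version of what is quoted.

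The genuine gap is in the second step. You invoke a Lipschitz bound $|w_R(x)-w_R(z)|\le K\rho r^{-n}|x-z|$ on $B_\rho(y_0)$, arguing that the rescaled $\tilde w$ solves a ``universal'' obstacle problem and therefore enjoys a universal Lipschitz estimate. For merely bounded measurable $a^{ij}$ this is false: De~Giorgi--Nash--Moser gives only $C^\alpha$, and there are classical examples of $L$-harmonic functions (hence also of obstacle-problem solutions) that are not Lipschitz. No such Lipschitz statement appears in \cite{BH1}; indeed the paper itself records only $C^\alpha$ convergence in Lemma~\ref{MinimConv}. So the inequality $w_R(x)\ge w_R(z)-K\rho r^{-n}|x-z|$ is unjustified.

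The repair is exactly what the paper does. ``Optimal regularity'' here means the quadratic upper bound at free boundary points: if $p\in\partial D_r(x_0)$ then $\sup_{B_s(p)} w_R\le C s^2 r^{-n}$, proved by comparing $w_R$ with its $L$-harmonic replacement on $B_s(p)$ and using the weak Harnack inequality together with the bound $|Lw_R|\le r^{-n}$. Applying this at the nearest free boundary point to $z_0$ gives $\gamma\rho^2 r^{-n}\le w_R(z_0)\le C\,\mathrm{dist}(z_0,\partial D_r(x_0))^2 r^{-n}$, hence $\mathrm{dist}(z_0,\partial D_r(x_0))\ge c\rho$, and the ball $B_{c\rho}(z_0)\subset D_r(x_0)\cap B_\rho(y_0)$ delivers the density lower bound. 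Replace your Lipschitz paragraph with this comparison and the argument goes through.
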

\begin{figure}[!h]
	\centering
	\scalebox{.65}{\includegraphics{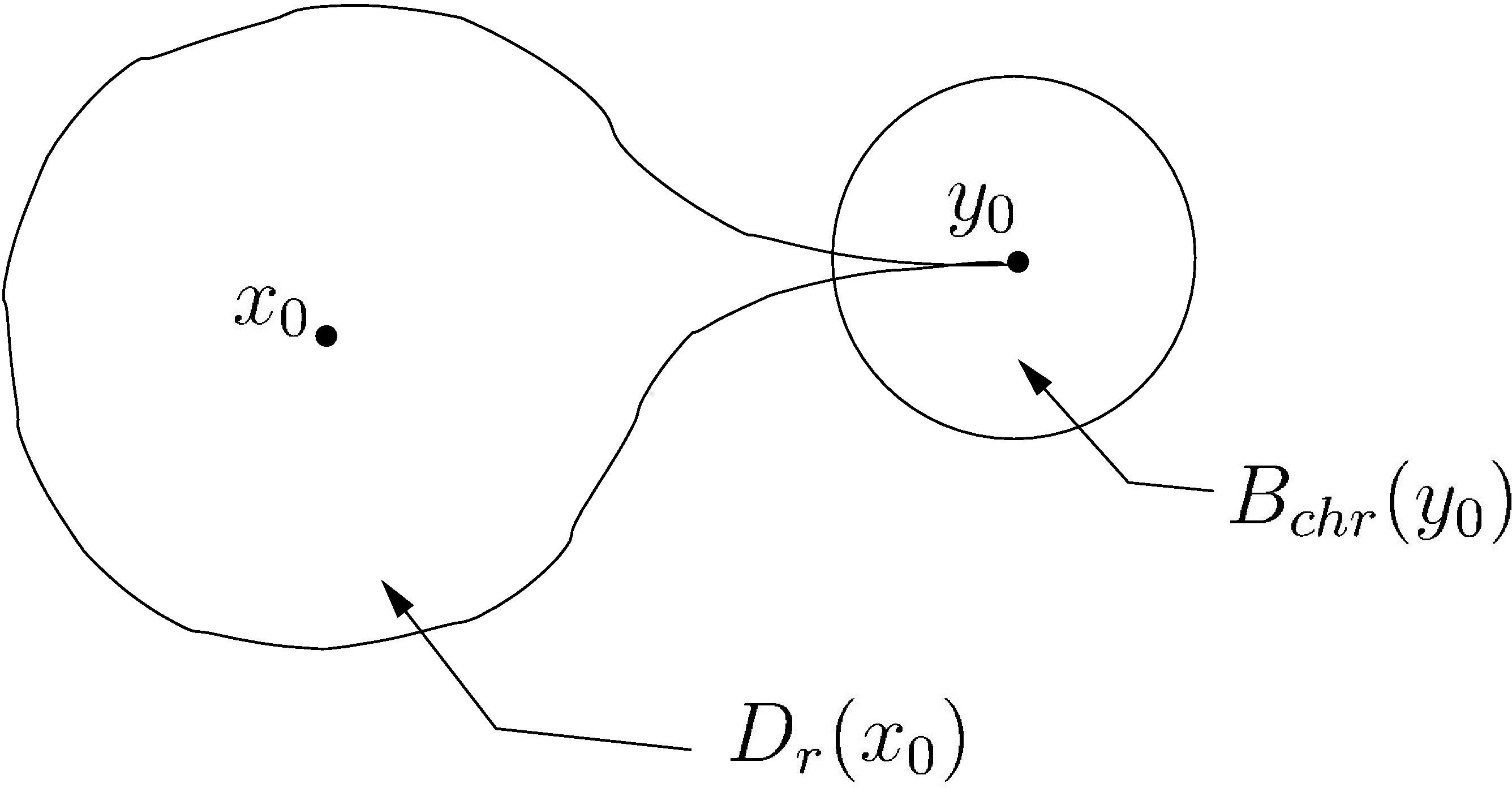}}
	\caption{Not possible according to the lemma.}
	\label{fig:lowdens}
\end{figure}
\begin{proof}
Without loss of generality we can rescale so that $r = 1.$
Observe that Theorem\refthm{gendivmvt}implies that that $x_0$ belongs to the complement
of $B_{ch}(y_0).$  It follows from the characterization of $D_1(x_0)$ as the noncontact set
for an obstacle problem along with the nondegeneracy result of the second author and Hao
(see Theorem 3.9 of \cite{BH1}) that there exists a point $z_0$ at a distance of $ch/2$ to $y_0$
where the solution to the corresponding obstacle problem has grown by an amount $\sim h^2.$
Next, by applying optimal regularity (see Theorem 3.2 of \cite{BH1}) we can be sure that there is
a ball with a radius bounded from below by a constant times $h$ which is centered at $z_0$ which
is not in the contact set.
\end{proof}


\begin{lemma}[Convergence of Minimizers]  \label{MinimConv}
For any $q > 0,$ we let $u_q$ minimize $J_q$ within the set:
\begin{equation}
   K_{M,G} := \{ u \in W^{1,2}(B_M) \; : \; u - G \in W_0^{1,2}(B_M), \ \text{and} \ u \leq G \; a.e. \}
\label{eq:KMGdef}
\end{equation}
where $J_q$ is as given in Equation\refeqn{JRdefn}above.
Now fix $r > 0.$  Then
\begin{equation}
    u_s \rightharpoonup u_r \ \text{in} \ W^{1,2}(B_M)
\label{eq:HappyLimit1}
\end{equation}
and
\begin{equation}
    \lim_{s \rightarrow r} ||u_s - u_r||_{C^{\alpha}(\dclosure{B_M})} = 0
\label{eq:HappyLimit2}
\end{equation}
for some $\alpha > 0.$
\end{lemma}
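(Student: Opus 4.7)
The plan is to apply the direct method of the calculus of variations: obtain uniform $W^{1,2}$ bounds on $\{u_s\}$ for $s$ near $r$, extract a weak limit, identify it as $u_r$ by minimality, and finally upgrade to $C^\alpha$ convergence via uniform H\"older estimates. First I would restrict to a compact interval $I \subset (0,\infty)$ containing $r$ in its interior, so $s^{-n}$ is bounded for $s \in I$. Testing the minimality $J_s(u_s) \leq J_s(G_{\mathrm{sm}})$ against the admissible competitor $G_{\mathrm{sm}}$ from Remark~\ref{Technic}, together with uniform ellipticity, yields
\begin{equation*}
\lambda \int_{B_M} |\nabla u_s|^2\,dx \;\leq\; \int_{B_M} a^{ij} D_i G_{\mathrm{sm}} D_j G_{\mathrm{sm}}\,dx + 2 s^{-n} \int_{B_M} (u_s - G_{\mathrm{sm}})\,dx.
\end{equation*}
Admissibility gives $u_s \leq G$, and the weak minimum principle applied to the supersolution $u_s$ (since $L u_s \leq 0$) with boundary data $G$ on $\partial B_M$ gives $u_s \geq \min_{\partial B_M} G$; since $G \in L^1(B_M)$, the linear term is bounded uniformly in $s \in I$. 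A Poincar\'e estimate on $u_s - G_{\mathrm{sm}} \in W_0^{1,2}(B_M)$ then yields $\|u_s\|_{W^{1,2}(B_M)} \leq C$ uniformly in $s$.

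Next, for any sequence $s_k \to r$, weak compactness extracts a subsequence $u_{s_{k_j}} \rightharpoonup u^\ast$ in $W^{1,2}(B_M)$, with $u^\ast \in K_{M,G}$ by weak closedness of that convex, strongly closed set. For any competitor $v \in K_{M,G}$, minimality of $u_{s_{k_j}}$ gives $J_{s_{k_j}}(u_{s_{k_j}}) \leq J_{s_{k_j}}(v)$. Letting $j \to \infty$, the right side tends to $J_r(v)$ since only the linear term depends on $s$ and $s_{k_j}^{-n} \to r^{-n}$, while the left side satisfies $\liminf_j J_{s_{k_j}}(u_{s_{k_j}}) \geq J_r(u^\ast)$ by weak lower semicontinuity of the quadratic $\int a^{ij} D_i u D_j u$ combined with $\int u_{s_{k_j}} \to \int u^\ast$. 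Hence $J_r(u^\ast) \leq J_r(v)$ for every $v \in K_{M,G}$, and strict convexity of $J_r$ on $K_{M,G}$ forces $u^\ast = u_r$. Since every weak accumulation point of $\{u_s\}$ equals $u_r$, the full family converges weakly, proving~\eqref{eq:HappyLimit1}.

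For~\eqref{eq:HappyLimit2}, I would appeal to uniform elliptic regularity. For $s \in I$, the right-hand side $L u_s = -s^{-n}\chi_{\{u_s < G\}}$ is uniformly bounded in $L^\infty(B_M)$; the function $u_s$ is uniformly bounded in $L^\infty(B_M)$ (on the contact set $u_s = G$ stays uniformly bounded away from $x_0$ since that set lies outside a fixed ball $B_{c \min I}(x_0)$, while on the noncontact set an $L^\infty$ comparison against the right-hand side and boundary data gives an $s$-uniform bound); and the boundary values $u_s|_{\partial B_M} = G|_{\partial B_M}$ are a fixed smooth function since $x_0$ is interior to $B_M$. De~Giorgi--Nash--Moser then supplies a uniform bound $\|u_s\|_{C^{0,\beta}(\overline{B_M})} \leq C$ for some $\beta > 0$. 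Arzel\`a--Ascoli extracts from any sequence $s_k \to r$ a sub-subsequence converging uniformly on $\overline{B_M}$; interpolating with the uniform $C^{0,\beta}$ bound upgrades this to convergence in $C^\alpha(\overline{B_M})$ for every $\alpha \in (0,\beta)$. The limit must coincide with the weak $W^{1,2}$-limit $u_r$, and uniqueness of the limit propagates the convergence to the entire family.

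The main obstacle is establishing the uniform $C^{0,\beta}$ estimate with constants that do not degenerate as $s \to r$; although the obstacle $G$ is singular at $x_0$, the needed $s$-uniform $L^\infty$ bounds on $u_s$ and on $L u_s$ can be extracted by splitting into contact and noncontact regions as above, after which standard De~Giorgi--Nash--Moser theory applies. The remaining ingredients---uniform $W^{1,2}$ energy bounds, weak compactness, lower semicontinuity, and strict-convexity uniqueness---are essentially routine.
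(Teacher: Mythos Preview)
Your proof is correct and follows essentially the same route as the paper's: uniform $W^{1,2}$ and $C^{\alpha}$ bounds (which the paper outsources to \cite{BH1}), weak/compact extraction of a limit, identification of that limit with $u_r$ via energy comparison and uniqueness of minimizers, and a subsequence argument to conclude full convergence. Your identification step is in fact slightly cleaner than the paper's---you test directly against an arbitrary competitor $v$ and pass to the limit, whereas the paper sandwiches $J_r(\tilde{u})$ between $J_r(u_r)$ and $\liminf J_{s_j}(u_j)$ and then runs a separate contradiction argument to close the chain.
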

\begin{proof}
It is not hard to show that if $s_m$ is a sequence of positive numbers converging to $r,$ and if
we let $u_m := u_{s_m},$ then the sequence $\{u_m\}$ is uniformly bounded in $W^{1,2}(B_M)$
and uniformly bounded in $C^{\alpha}(\dclosure{B_M}).$  (See section 4 of \cite{BH1} for details.)
Thus, by using standard functional analysis we can be sure that there is a subsequence of $s_m$
which we will denote by $s_j$ such that we have
\begin{equation}
    u_j \rightharpoonup \tilde{u} \ \text{in} \ W^{1,2}(B_M) \ \ \text{and} \ \
    \lim_{j \rightarrow \infty} ||u_j - \tilde{u}||_{C^{\alpha}(\dclosure{B_M})} = 0
\label{eq:NYHL}
\end{equation}
for some function $\tilde{u} \in W^{1,2}(B_M) \cap C^{\alpha}(\dclosure{B_M}).$  Since the
original sequence $\{s_m\}$ was arbitrary, it remains only to show that $\tilde{u} = u_r.$

First note that for all of the $u_m$ in our sequence, we have:
\begin{equation}
     |J_r(u_m) - J_{s_m}(u_m)|  \leq \int_{B_M} \left| 2(s_m)^{-n} - 2r^{-n} \right| u_m
                                                \leq \left| 2(s_m)^{-n} - 2r^{-n} \right| \tilde{C}
\label{eq:SimpStupEst}
\end{equation}
where $\tilde{C}$ is the maximum of the $L^1$ norms of the $u_m.$  Of course, as we let $m \rightarrow \infty$
the right hand side goes to zero.  We know
\begin{alignat*}{2}
    J_{r}(u_r) &\leq J_r(\tilde{u}) \ \ \ \ \ & \ &\text{because} \ u_r \ \text{minimizes} \ J_r \\
                       &\leq \liminf_{j \rightarrow \infty} J_r(u_j) \ \ \ \ & \ &\text{by weak lower semicontinuity} \\
                       &= \liminf_{j \rightarrow \infty} J_{s_j}(u_j) \ \ & \ &\text{by using Equation\refeqn{SimpStupEst}\!\!.}
\end{alignat*}

Now we claim that
\begin{equation}
    \liminf_{j \rightarrow \infty} J_{s_j}(u_j) \leq J_r(u_r)
\label{eq:LPoC}
\end{equation}
which we can combine with the chain of inequalities in the previous paragraph along with uniqueness
of minimizers to show that $\tilde{u} = u_r.$ Suppose that this is not the case.  Then there exists
$s_k \rightarrow r$ and an $\epsilon > 0$ such that
\begin{equation}
    J_{s_k}(u_k) \geq J_r(u_r) + \epsilon \;.
\label{eq:DumbOrder}
\end{equation}
On the other hand, for sufficiently large $k,$ by using Equation\refeqn{SimpStupEst}again and then
Equation\refeqn{DumbOrder}we have
$$J_{s_k}(u_r) \leq J_r(u_r) + \epsilon/2 \leq J_{s_k}(u_k) - \epsilon/2 < J_{s_k}(u_k)$$
which contradicts the fact that $u_k$ is the minimizer of $J_{s_k}.$
\end{proof}


\begin{remark}[Statement for the $w_R$]   \label{WRS}
Of course in the language of the height functions $w_R,$ as long as $K$ is compactly contained
in the complement of $\{x_0\}$ we have
\begin{equation}
    \lim_{r \rightarrow s} ||w_r - w_s||_{C^{\alpha}(K)} = 0 \;.
\label{eq:wRconv}
\end{equation}
\end{remark}


%


\begin{lemma}[Continuous Expansion]   \label{ContExp}
Fix $x_0, y_0 \in \Omega$ and assume that there exists $0 < s < t$ so that
$y_0$ is not contained in $D_s(x_0),$ and is compactly contained within $D_t(x_0).$
Then there exists a unique $r \in (s,t)$ such that $y_0 \in \partial D_r(x_0).$
\end{lemma}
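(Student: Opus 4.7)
The plan is to introduce the real-valued function
\[
  f(\rho) \;:=\; w_\rho(y_0) \;=\; G(y_0, x_0) - u_\rho(y_0), \qquad \rho \in [s,t],
\]
and reduce both existence and uniqueness to an analysis of $f$. Since $x_0 \in D_s(x_0)$ by Lemma~\ref{SimpStruct} while $y_0 \notin D_s(x_0)$, we have $y_0 \ne x_0$, so a small closed ball about $y_0$ avoids the singularity of $G$ and Remark~\ref{WRS} yields continuity of $f$. Monotonicity $f(\rho) \le f(\rho')$ for $\rho < \rho'$ is equivalent to $u_\rho \ge u_{\rho'}$, which follows from the weak maximum principle applied to $u_\rho - u_{\rho'}$ on $D_\rho$: there $L(u_\rho - u_{\rho'}) = -\rho^{-n} + \rho'^{-n} < 0$ and $u_\rho - u_{\rho'} = G - u_{\rho'} \ge 0$ on $\partial D_\rho$. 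The hypotheses translate to $f(s) = 0$ and $f(t) > 0$.

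Set $r^* := \inf\{\rho \in [s,t] : f(\rho) > 0\}$. By continuity $f(r^*) = 0$, so $y_0 \notin D_{r^*}$, while $y_0 \in D_\rho$ for every $\rho > r^*$. To promote this to $y_0 \in \partial D_{r^*}$ it suffices to prove $y_0 \in \overline{D_{r^*}}$, and I would argue by contradiction: if $B_\epsilon(y_0) \cap D_{r^*} = \emptyset$ for some $\epsilon > 0$ with $x_0 \notin B_\epsilon(y_0)$, then $w_{r^*} \equiv 0$ on $B_\epsilon(y_0)$, and by Remark~\ref{WRS} one has $\sup_{B_{\epsilon/2}(y_0)} w_\rho \to 0$ as $\rho \searrow r^*$. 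Pick $\rho > r^*$ with this supremum much smaller than $c\,\epsilon^2 (r^*)^{-n}$ for a constant $c$ depending only on $L$ (fixed below). If $B_{\epsilon/4}(y_0) \subset D_\rho$ then $L w_\rho = \rho^{-n}$ on $B_{\epsilon/4}(y_0)$, and comparison with the Poisson solution $v$ of $Lv = \rho^{-n}$ in $B_{\epsilon/4}(y_0)$ with zero boundary values (for which $v(y_0) \le -c\,\epsilon^2\rho^{-n}$) yields, via the maximum principle applied to the $L$-harmonic function $w_\rho - v$, the bound $w_\rho(y_0) \le v(y_0) + \sup_{\partial B_{\epsilon/4}(y_0)} w_\rho < 0$, contradicting $y_0 \in D_\rho$. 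Otherwise $\partial D_\rho \cap B_{\epsilon/4}(y_0) \ne \emptyset$, and the nondegeneracy result of Theorem~3.9 of \cite{BH1} (as used in the proof of Lemma~\ref{DensRes}) applied at such a boundary point provides a lower bound on $\sup_{B_{\epsilon/4}(z)} w_\rho$ proportional to $\epsilon^2 \rho^{-n}$, again contradicting the smallness of $w_\rho$ on $B_{\epsilon/2}(y_0)$. Hence $y_0 \in \partial D_{r^*}$.

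For uniqueness, any $r$ with $y_0 \in \partial D_r$ must satisfy $f(r) = 0$, hence $r \le r^*$, while for $r > r^*$ the point $y_0$ is interior to $D_r$ and thus not on its boundary. It therefore suffices to prove the strict-monotonicity statement $\overline{D_r} \subset D_{r^*}$ whenever $r < r^*$. Fix $y \in \partial D_r$ and set $\psi := u_r - u_{r^*}$. Monotonicity gives $\psi \ge 0$ globally, and applying the strong maximum principle to the supersolution $L\psi = (r^*)^{-n} - r^{-n} < 0$ on $D_r$ forces $\psi > 0$ strictly on the open set $D_r$. If it were the case that $y \notin D_{r^*}$, i.e., $u_{r^*}(y) = G(y)$, then $\psi(y) = u_r(y) - G(y) = 0$ since $u_r(y) = G(y)$ as well. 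The Hopf boundary point lemma, applicable at $y \in \partial D_r$ because of the positive density of $D_r$ at $y$ furnished by Lemma~\ref{DensRes}, then forces the outward normal derivative of $\psi$ at $y$ to be strictly negative --- which contradicts the global inequality $\psi \ge 0$ just outside $D_r$. Therefore $y \in D_{r^*}$, proving $\overline{D_r} \subset D_{r^*}$; in particular any $y_0 \in \partial D_r$ with $r < r^*$ would lie in the open set $D_{r^*}$, contradicting $y_0 \in \partial D_{r^*}$.

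The main obstacle is pinning down $y_0 \in \partial D_{r^*}$: one has to rule out the possibility that $y_0$ is separated from $D_{r^*}$ by a small ball, which is where both the $C^\alpha$ stability of the obstacle-problem family and its quantitative nondegeneracy must be coupled. Once $r^*$ is identified, uniqueness falls out rather cleanly from the strong/Hopf maximum principle, using the density result of Lemma~\ref{DensRes} as the geometric substitute for a classical interior ball condition at free boundary points of $D_r$.
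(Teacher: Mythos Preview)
Your existence argument is essentially the paper's: both identify the critical parameter as the transition value between $\{y_0 \notin D_\rho\}$ and $\{y_0 \in D_\rho\}$ (your $r^*$ coincides with the paper's $r_0 := \sup\{t : y_0 \notin D_t\}$), and both rule out $\mathrm{dist}(y_0, \partial D_{r^*}) > 0$ by combining the $C^\alpha$ stability of Remark~\ref{WRS} with the quadratic nondegeneracy of the obstacle problem. Your explicit use of the scalar function $f(\rho) = w_\rho(y_0)$ and its monotonicity is a clean way to organize the argument; the paper leaves monotonicity implicit in the nestedness of the $D_r$.

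The uniqueness argument, however, has a genuine gap. You invoke the Hopf boundary point lemma at $y \in \partial D_r$, asserting that the positive density furnished by Lemma~\ref{DensRes} is a ``geometric substitute for a classical interior ball condition.'' It is not: Hopf genuinely requires an interior ball (or at the very least an interior cone or $C^{1,\mathrm{Dini}}$ boundary), and mere positive Lebesgue density of $D_r$ at its boundary points is far too weak a hypothesis to run a barrier argument. Worse, for divergence-form operators with only bounded measurable coefficients the function $\psi = u_r - u_{r^*}$ is in general no better than $C^\alpha$, so the ``outward normal derivative'' of $\psi$ at $y$ need not exist in any classical sense, and the step ``strictly negative normal derivative contradicts $\psi \ge 0$ just outside $D_r$'' has no content. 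The strict inclusion $\overline{D_r} \subset D_{r^*}$ is therefore not established by your argument. (For what it is worth, the paper's own proof addresses only existence and says nothing about uniqueness.)
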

\begin{proof}
We borrow some of the ideas used in the proof of the counter-example within \cite{BT}.
Define the set of real numbers:
$$S := \{ \; t \in \R \; : \; y_0 \notin D_t(x_0) \; \} \;,$$
and let $r_0$ be the supremum of $S.$  Because the $D_r(x_0)$ are an increasing family of sets
with respect to $r,$ the set $S$ is an interval.  We claim that $y_0 \in \partial D_{r_0}(x_0).$
Assuming that $y_0 \notin \partial D_{r_0}(x_0),$ then there exists a $\rho > 0$ so that
\begin{equation}
    \text{dist}(y_0, \partial D_{r_0}(x_0)) = \rho \;.
\label{eq:ClosedSetDist}
\end{equation}
At this point there are two possible cases:  In the first case $B_{\rho}(y_0)\subset D_{r_0}(x_0),$
and in the second case $B_{\rho}(y_0)\subset D_{r_0}(x_0)^c.$

Suppose first that $B_{\rho}(y_0) \subset D_{r_0}(x_0).$  In this case, we have 
$\Pi := \pclosure{B_{\rho/2}(y_0)} \; \subset D_{r_0}(x_0) = \{ w_{r_0} > 0 \},$ and so
if
$$\gamma := \min_{\Pi} w_{r_0},$$
then $\gamma > 0.$  By Lemma\refthm{MinimConv}\!, there exists a sufficiently small $\delta > 0$
such that $|r - r_0| < \delta$ implies
\begin{equation}
    ||w_r - w_{r_0}||_{L^{\infty}(\Pi)} \leq \gamma/2 \;.
\label{eq:closeness1}
\end{equation}
Then the triangle inequality implies $w_r \geq \gamma/2 > 0$ in all of $\Pi \subset D_{r_0}(x_0)$
which contradicts the definition of $r_0.$

Next suppose that $B_{\rho}(y_0) \subset D_{r_0}(x_0)^c = \{ w_{r_0} = 0 \}.$  Within $B_{\rho}(y_0)$
the function $w_r$ satisfies the obstacle problem:
\begin{equation}
    Lw_r = \chi_{\{w_r>0\}} r^{-n}
\label{eq:ZeroObs}
\end{equation}
and therefore $w_r$ enjoys the quadratic nondegeneracy property.  (See section 3 of \cite{BH1}.)  Because
of this nondegeneracy, as long as $r > r_0,$ (and by using the definition of $r_0,$) we can guarantee that
there is a point within $\Pi := B_{\rho/2}(y_0)$ where $w_r$ is greater than a constant $\gamma > 0.$  On the
other hand, by Lemma\refthm{MinimConv}again, there exists a sufficiently small $\delta > 0$ such that
$|r - r_0| < \delta$ implies
\begin{equation}
    ||w_r - w_{r_0}||_{L^{\infty}(\Pi)} \leq \gamma/2 \;.
\label{eq:closeness2}
\end{equation}
Thus
$$0 < \gamma \leq ||w_r||_{L^{\infty}(\Pi)} = ||w_r - w_{r_0}||_{L^{\infty}(\Pi)} \leq \gamma/2$$
which gives us a contradiction for this case.  Hence we must have $y_0 \in \partial D_{r_0}(x_0).$                                                          
\end{proof}
\section{Applications to a Bernoulli-type free boundary problem}  \label{AppBTFBP}

We turn now to applications of the mean value results to the following problem:
Given $a^{ij}(x)$ as above, and boundary data, $\varphi \geq 0$ given on $\partial B_1,$
we consider minimizers of the functional:
$$ J_{a}(u) := \int_{B_1} (a^{ij} D_i u D_j u+ \chisub{ \{ u > 0 \} } ) $$
which we gave above in Equation\refeqn{GenAltCaffFct}for a general domain $D.$  Now in the case where
$a^{ij} \equiv \delta^{ij}$ the functional $J_a(u)$ simplifies to:
$$ J(u) := \int_{B_1} (|\nabla u|^2 + \chisub{ \{ u > 0 \} } ). $$
Alt and Caffarelli considered local minimizers of this functional,
and indeed this problem was used as a model
problem within the text by Caffarelli and Salsa.  We will say that $u_0$ is a local minimizer of
$J,$ if given any subdomain $D_0$ of $B_1$ the value of
$$J(u_0; D_0) := \int_{D_0} (|\nabla u_0|^2 + \chisub{ \{ u_0 > 0 \} } ), $$
is less than or equal to the value of $J(v;D_0)$ for any $v$ which is equal to $u_0$
on $\partial D_0.$

Some highlights of what is known about functions $u_0$ which locally minimize $J(u)$ in $B_1$ include the
following:
\begin{theorem}[Basic Facts for Minimizers of $J$]  \label{BasicFactsJ}
Within any $D_0 \subset \subset B_1$ we have:
\begin{enumerate}
    \item $u_0$ is Lipschitz.
    \item If $z_0 \in D_0 \cap \partial \{ u_0 > 0 \},$ then there is a constant $C > 0$ depending only
on $n$ and $||u_0||_{L^2(B_1)}$ such that
\begin{equation}
     C^{-1} r \leq \sup_{B_r(z_0)} u_0 \leq Cr \;.
\label{eq:abovenbelow}
\end{equation}
    \item With $z_0 \in D_0 \cap \partial \{ u_0 > 0 \}$ again, there is a universal $\theta > 0$ such that
\begin{equation}
    \mathcal{L}^n(\{ u_0 > 0 \} \cap B_r(z_0)) \geq \theta r^n \ \ \ \text{and} \ \ \ 
    \mathcal{L}^n(\{ u_0 = 0 \} \cap B_r(z_0)) \geq \theta r^n
\label{eq:DensRes}
\end{equation}
where we use $\mathcal{L}^n(S)$ to denote the n-dimensional Lebesgue measure of $S.$
    \item Using $\mathcal{H}^{\gamma}(S)$ to denote the $\gamma$-dimensional Hausdorff
measure of $S,$ then given $D_0 \subset \subset B_1$ there is a universal $C$ such that
\begin{equation}
     \mathcal{H}^{n-1}(\partial \{ u_0 > 0 \} \cap D_0) \leq C.
\label{eq:nmoHm}
\end{equation}
    \item $|\nabla u_0| = 1$ in a suitable sense on almost all of the free boundary.
\end{enumerate}
\end{theorem}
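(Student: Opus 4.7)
All five items are the foundational facts of the Alt--Caffarelli theory for minimizers of $J$, and my plan is to establish them in the order (2-upper), (1), (2-lower), (3), (4), (5), since each one feeds the next. The overarching technique is comparison with competitors obtained by (i) harmonic replacement inside $\{u_0>0\}$ and (ii) cutoff against well-chosen barriers. Throughout I would normalize by translating so that $z_0=0$ and work in a fixed ball $B_r\subset\subset D_0\subset\subset B_1$.

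\textbf{Lipschitz regularity and linear growth.} First I would show the easy direction of (2): if $x\in\{u_0>0\}$ and $d=\mathrm{dist}(x,\partial\{u_0>0\})$, then in the ball $B_d(x)$ the function $u_0$ is harmonic (the measure term is constant on $\{u_0>0\}$, so minimality forces $\Delta u_0=0$ there), so comparison with the harmonic replacement in a slightly larger ball that touches the free boundary gives $u_0(x)\le C d$. Then I would pass to the Lipschitz statement (1) by a standard ball-covering / Harnack chain argument, using that away from the free boundary $u_0$ is harmonic. For the lower bound in (2), the plan is the classical Alt--Caffarelli competitor: given $z_0\in\partial\{u_0>0\}$ and a small ball $B_r(z_0)$, if $\sup_{B_r(z_0)}u_0\le\varepsilon r$ with $\varepsilon$ small, define $v:=\min(u_0,(|x-z_0|-r/2)_+^{-})$ or more precisely subtract from $u_0$ a small superharmonic bump supported in $B_r(z_0)$; the gain in the measure term $|\{v>0\}|<|\{u_0>0\}|$ beats the loss in the Dirichlet term when $\varepsilon$ is universally small, contradicting minimality.

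\textbf{Density and perimeter estimates.} Item (3) is now a direct consequence of (1) and (2): the upper bound in (2) plus Lipschitzness yields $u_0\le C\,\mathrm{dist}(\cdot,\partial\{u_0>0\})$, while the lower bound guarantees a point in $B_r(z_0)$ where $u_0\sim r$, and Lipschitzness spreads that bound over a ball of comparable radius, giving positive density of $\{u_0>0\}$; for the density of $\{u_0=0\}$ I would compare $u_0$ in $B_r(z_0)$ with its harmonic replacement $h$ having the same boundary data, exploiting that $h>0$ throughout $B_r(z_0)$ (since its boundary values are not identically zero on the positive-density trace) and using the strong maximum principle together with the computation of $J(h)-J(u_0)$ to force the zero set of $u_0$ to fill a positive fraction of the ball. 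For (4), I plan the standard Vitali-type covering: choose a maximal disjoint family of balls $B_{r/5}(z_i)$ centered on $\partial\{u_0>0\}\cap D_0$, use (3) on each to estimate the number of balls in terms of $\mathcal L^n(\{u_0>0\}\cap D_0)\le C$, and conclude $\mathcal H^{n-1}\le C$ from $N\cdot r^{n-1}\lesssim 1$.

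\textbf{Identifying the gradient at the free boundary.} For (5), I would perform a blow-up analysis: at an $\mathcal H^{n-1}$-a.e.\ point $z_0$ where the reduced boundary admits a unit inward normal $\nu$, form the rescalings $u_{0,\rho}(x):=u_0(z_0+\rho x)/\rho$, extract a subsequential Lipschitz limit $u_\infty$ by (1), use the density bounds (3) to identify its positivity set as a half-space, and then use (2) to show $u_\infty$ is nontrivial. Since $u_\infty$ minimizes the blown-up functional, the Euler--Lagrange equation at the (now flat) free boundary forces $u_\infty(x)=(x\cdot\nu)_+$, yielding $|\nabla u_0|\to 1$ in the blow-up sense. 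The main obstacle I anticipate is precisely step (5): producing a clean blow-up limit requires the rectifiability of the reduced boundary and careful control of the Radon measure $\Delta u_0$ on $\partial\{u_0>0\}$, which is the technical heart of the Alt--Caffarelli argument; the earlier steps (1)--(4) are comparison arguments of moderate difficulty, but (5) genuinely requires the full measure-theoretic machinery.
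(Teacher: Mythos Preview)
The paper does not supply its own proof of this theorem at all: immediately after the statement it simply says ``Everything in the theorem above was proven by Alt and Caffarelli.  See \cite{AC,CS} for details.''  So there is nothing to compare your proposal against beyond the cited references, and your outline is indeed the standard Alt--Caffarelli program found in \cite{AC} and \cite{CS}.

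One technical remark on your sketch of item (4): the Vitali covering argument as you describe it only yields $N\cdot r^{n}\lesssim 1$ from the density bounds in (3), which controls a dimension-$n$ quantity, not $N\cdot r^{n-1}\lesssim 1$.  The classical route to the $\mathcal{H}^{n-1}$ bound in \cite{AC} goes instead through the observation that $\Delta u_0$ is a nonnegative Radon measure supported on the free boundary, with mass in each ball controlled by an energy comparison; the density estimates then show this measure is Ahlfors $(n-1)$-regular and comparable to $\mathcal{H}^{n-1}\llcorner\partial\{u_0>0\}$.  You may want to adjust that step accordingly.
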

\noindent
Everything in the theorem above was proven by Alt and Caffarelli.  See \cite{AC,CS} for details.

More recently, dos Prazeres and Teixeira studied the local minimizers of the more general functional $J_a$ where
the $a^{ij}$ which appear are assumed to be no more than bounded, symmetric, and uniformly elliptic.  Now in
this case, there is no hope of proving that minimizers are better than the \Holder regularity given by the
famous result of De Giorgi and Nash.  On the other hand dos Prazeres and Teixeira proved that functions
$u_0$ which locally minimize $J_a(u)$ in $B_1$ satisfy the following:
\begin{theorem}[Basic Facts for Minimizers of $J_a$]  \label{BasicFactsJa}
Within any $D_0 \subset \subset B_1$ we have:
\begin{enumerate}
    \item If $z_0 \in D_0 \cap \partial \{ u_0 > 0 \},$ then there is a constant $C > 0$ depending only
on $n, \lambda, \Lambda,$ and $||u_0||_{L^2(B_1)}$ such that
\begin{equation}
     C^{-1} r \leq \sup_{B_r(z_0)} u_0 \leq Cr \;.
\label{eq:abovenbelowJa}
\end{equation}
     \item With $z_0 \in D_0 \cap \partial \{ u_0 > 0 \}$ again, there is a universal $\theta > 0$ such that
\begin{equation}
    \mathcal{L}^n(\{ u_0 > 0 \} \cap B_r(z_0)) \geq \theta r^n.
\label{eq:DensResJa}
\end{equation}
\end{enumerate}
\end{theorem}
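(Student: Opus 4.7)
The plan is to obtain (1) and (2) by paralleling Alt and Caffarelli's arguments for the Laplacian, using $L$-harmonic replacement where they use harmonic replacement and invoking the mean value theorem (Theorem \ref{gendivmvt}) for the pointwise estimates on $L$-harmonic functions that replace the Poisson formula.

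For the upper bound in (1), let $v$ solve $Lv = 0$ in $B_r(z_0)$ with $v = u_0$ on $\partial B_r(z_0)$. Since $u_0$ is a variational subsolution of $Lu = 0$ (it is $L$-harmonic on $\{u_0 > 0\}$ and nonnegative on $\{u_0 = 0\}$), the weak maximum principle forces $0 \leq u_0 \leq v$ in $B_r(z_0)$. Using $v$ as a competitor for $u_0$ in $J_a$ together with the Dirichlet-minimality of $v$ among functions matching its boundary data yields
\begin{equation*}
\int_{B_r(z_0)} a^{ij}\, D_i(v - u_0)\, D_j(v - u_0) \;\leq\; \left|\{u_0 = 0\} \cap B_r(z_0)\right| \;\leq\; |B_r|,
\end{equation*}
hence by uniform ellipticity and Poincar\'e applied to $v - u_0 \in W_0^{1,2}(B_r(z_0))$, we obtain $||v-u_0||_{L^2(B_r)}^2 \leq C r^{n+2}$. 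A blow-up/compactness argument then closes the upper bound: if $\sup_{B_{r_k}(z_0)} u_0 / r_k \to \infty$ along a sequence $r_k \downarrow 0$, rescale $u_0$ by $r_k \sup_{B_{r_k}} u_0$ to extract a minimizer of a limiting Bernoulli functional with constant coefficients, vanishing at the origin and growing superlinearly---contradicting the Lipschitz bound for the Laplacian, which extends to any constant-coefficient operator by linear change of variables.

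For the lower bound in (1), I would argue by contradiction. Assume $\sup_{B_r(z_0)} u_0 \leq \varepsilon r$ for a universal $\varepsilon$ to be fixed. Take as competitor $\tilde u = (u_0 - \varepsilon r \phi)^{+}$, where $\phi$ is a smooth cutoff equal to $1$ on $B_{r/2}(z_0)$ and $0$ on $\partial B_r(z_0)$; by the assumption, $\tilde u$ vanishes on $B_{r/2}(z_0)$ and equals $u_0$ on $\partial B_r(z_0)$. A standard cutoff estimate yields $\int_{B_r} |\nabla \tilde u|^2 - \int_{B_r} |\nabla u_0|^2 \leq C \varepsilon^2 r^n$, while the $\chi_{\{u > 0\}}$ term saves $|\{u_0 > 0\} \cap B_{r/2}(z_0)|$. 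The apparent circularity with part (2) is resolved by bootstrapping on dyadic scales and invoking Lemma \ref{ContExp} to track the free boundary $\partial D_\rho(z_0)$ continuously: either the saving exceeds $C \varepsilon^2 r^n$ at some intermediate scale---contradicting minimality---or iteration forces $u_0 \equiv 0$ on a neighborhood of $z_0$, contradicting $z_0 \in \partial\{u_0 > 0\}$.

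Finally, part (2) follows from the two halves of (1). The lower bound provides $y \in B_{r/2}(z_0)$ with $u_0(y) \geq C^{-1} r / 2$; the upper bound applied at any free-boundary point $y' \in \partial\{u_0 > 0\} \cap B_r(z_0)$ gives $u_0(y) \leq C |y - y'|$, forcing $|y - y'| \geq c r$, so $B_{cr}(y) \subset \{u_0 > 0\} \cap B_r(z_0)$ and $|\{u_0 > 0\} \cap B_r(z_0)| \geq \theta r^n$. The main obstacle throughout is the lower nondegeneracy bound: the divergence structure precludes the direct pointwise-gradient arguments available for the Laplacian, and the delicate competitor balance must be controlled using Theorem \ref{gendivmvt} together with the newly-established continuous-expansion property of Lemma \ref{ContExp}.
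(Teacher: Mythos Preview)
The paper does not give its own proof of Theorem~\ref{BasicFactsJa}. Immediately after the statement it simply writes ``See \cite[Theorem 1.1]{dPT},'' and throughout Section~3 the result is treated as an input from dos~Prazeres and Teixeira (e.g.\ the proof of Theorem~\ref{PDCSFB} explicitly invokes ``the nondegeneracy and the optimal regularity of $u_0$ due to dos~Prazeres and Teixeira''). In Section~3.2 the authors remark that their mean-value machinery can be used to reprove ``many of the results'' of \cite{dPT}, but they only write out the key Nondegeneracy Lemma (Lemma~\ref{eq:nondeglem}) and refer the reader to Caffarelli--Salsa for the remaining iteration; Theorem~\ref{BasicFactsJa} itself is never proved in the paper. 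So there is no proof in the paper for me to compare your proposal against.

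That said, a brief comment on your sketch. Your use of Lemma~\ref{ContExp} in the lower-bound argument is misplaced: that lemma concerns the mean value sets $D_r(x_0)$ arising from the obstacle problem in Equation~\eqref{eq:KeyObProb2}, and $\partial D_\rho(z_0)$ has nothing to do with the Bernoulli free boundary $\partial\{u_0>0\}$. The paper's actual application of Lemma~\ref{ContExp} (in the proof of Lemma~\ref{eq:nondeglem}) is to locate a point $y_0\in\partial D_\rho(x_0)$ on the level set $\{w=\sigma/3\}$ of an already $L$-harmonic function, not to ``track the free boundary'' of the minimizer. Your upper-bound blow-up is also incomplete as written: to extract a convergent subsequence you need uniform a~priori control on the rescalings, which is precisely the optimal regularity you are trying to prove; the argument in \cite{dPT} (and the Alt--Caffarelli original) avoids this circularity by a different comparison, not by compactness of the rescaled solutions.
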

See \cite[Theorem 1.1]{dPT}.
Also considered by dos Prazeres and Teixeira were $a^{ij}$ satisfying what they called the ``$K$-Lip''
property which do allow for Lipschitz estimates of the minimizers, but we never make this assumption.
(For those details, see \cite[Definition 3.3]{dPT}.)  Of course, even without any further hypotheses, one
can reasonably view Equation\refeqn{abovenbelowJa}as saying that ``at the free boundary'' the solutions
enjoy a Lipschitz-type behavior.  On the other hand, for general $a^{ij}$ one can easily construct a
counter-example to the statement: ``The one sided gradient exists at the free boundary''
by choosing $a^{ij}$ as in the paper by Blank and Teka, and then doing a blow up argument.  Thus, it
seems very difficult to get a successful analogue of the fifth statement in Theorem\refthm{BasicFactsJ}above.
It also seems difficult or impossible to prove Equation\refeqn{nmoHm}in the general case, although as
dos Prazeres and Teixeira observed, the free boundary is necessarily porous, and so if one is willing to
weaken $\mathcal{H}^{n-1}$ measure to $\mathcal{H}^{n- \zeta}$ measure for a $\zeta$ which is
between $0$ and $1,$ then one can assert the analogue \cite{dPT}.
From a certain point of view, the upshot is that the biggest gap between Theorem\refthm{BasicFactsJ}and
Theorem\refthm{BasicFactsJa}that we can hope to close is the fact that Equation\refeqn{DensResJa}is only
giving half of what Equation\refeqn{DensRes}gave, and that leads to our first application.

\subsection{Application 1: Positive Density of the Contact Set on the Free Boundary}

\begin{theorem}[Positive Density of the Contact Set on the Free Boundary]  \label{PDCSFB}
In the same setting as in Theorem\refthm{BasicFactsJa}and with $z_0 \in D_0 \cap \partial \{ u_0 > 0 \}$
there exists a $\theta > 0$ depending only on $n, \lambda, \Lambda,$ and $||u_0||_{L^2(B_1)}$ such that
\begin{equation}
    \mathcal{L}^n(\{ u_0 = 0 \} \cap B_r(z_0)) \geq \theta r^n.
\label{eq:OtherDensResJa}
\end{equation}
\end{theorem}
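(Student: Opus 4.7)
My plan is to adapt Alt and Caffarelli's proof of positive density for the contact set, using Theorem\refthm{gendivmvt}in place of the classical mean value property that appears in their Laplacian argument.

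I would argue by contradiction: suppose $|\{u_0 = 0\} \cap B_r(z_0)| \leq \varepsilon r^n$ for a small $\varepsilon > 0$ to be pinned down at the end. Let $v$ be the $L$-harmonic replacement of $u_0$ in $B_r(z_0)$, i.e., $Lv = 0$ in $B_r(z_0)$ with $v = u_0$ on $\partial B_r(z_0)$. Since $u_0$ is a nonnegative $L$-subsolution, the maximum principle gives $v \geq u_0 \geq 0$. Comparing $J_a(u_0) \leq J_a(v)$ on $B_r(z_0)$ and using the Dirichlet identity for $v - u_0 \in W_0^{1,2}(B_r(z_0))$, I would get
$$\int_{B_r(z_0)} a^{ij} D_i(v-u_0) D_j(v-u_0) \leq |\{u_0 = 0\} \cap B_r(z_0)| \leq \varepsilon r^n,$$
and uniform ellipticity combined with Poincar\'e's inequality would then give $\|v - u_0\|_{L^2(B_r(z_0))}^2 \leq C\varepsilon r^{n+2}$.

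The middle step --- and the one where I expect the Mean Value Theorem to do its real work --- is to prove $v(z_0) \geq c_2 r$. The nondegeneracy part of Theorem\refthm{BasicFactsJa}yields $y_0 \in B_{r/2}(z_0)$ with $u_0(y_0) \geq c_1 r$, hence $v(y_0) \geq c_1 r$. Moser's Harnack inequality for the nonnegative $L$-harmonic $v$ then forces $v \geq c_1 r / C_H$ throughout $B_{r/2}(z_0)$. Picking $R$ at a universal multiple of $r$ so that $D_R(z_0) \subset B_r(z_0)$ (via the inclusion $D_R(z_0) \subset B_{CR}(z_0)$ from Theorem\refthm{gendivmvt}\!\!), and noting that $v$, being $L$-harmonic, is simultaneously a subsolution and a supersolution, Theorem\refthm{gendivmvt}collapses to the identity
$$v(z_0) = \frac{1}{|D_R(z_0)|} \int_{D_R(z_0)} v \geq \frac{c_1 r}{C_H} \cdot \frac{|D_R(z_0) \cap B_{r/2}(z_0)|}{|D_R(z_0)|} \geq c_2 r,$$
where the last inequality uses $B_{cR}(z_0) \subset D_R(z_0)$ (again from Theorem\refthm{gendivmvt}\!\!) to see that the ratio $|D_R(z_0) \cap B_{r/2}(z_0)|/|D_R(z_0)|$ is bounded below by a universal constant.

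To finish, De Giorgi--Nash--Moser regularity gives $v - u_0 \in C^\alpha(B_{r/2}(z_0))$ with seminorm at most $C r^{1-\alpha}$, so the pointwise bound $(v - u_0)(z_0) = v(z_0) \geq c_2 r$ extends to $v - u_0 \geq c_2 r / 2$ on some ball $B_{\rho_0 r}(z_0)$ with $\rho_0 > 0$ universal. Integrating gives $\|v - u_0\|_{L^2(B_{\rho_0 r}(z_0))}^2 \geq c_3 r^{n+2}$, which contradicts the first-paragraph $L^2$ bound once $\varepsilon < c_3/C$. The hard part is clearly the middle paragraph: Theorem\refthm{gendivmvt}is essential precisely there, since it is what allows Alt and Caffarelli's strategy, which relies on an average over a ball, to go through in the general divergence-form setting via an honest average over $D_R(z_0)$.
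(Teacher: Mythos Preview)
Your proposal is correct and follows essentially the same architecture as the paper's proof: compare $u_0$ with its $L$-harmonic replacement $v$ on $B_r(z_0)$, use minimality plus Poincar\'e to bound $\|v-u_0\|_{L^2}^2$ by the measure of the zero set, establish $v-u_0 \gtrsim r$ on a sub-ball of radius $\sim r$, and integrate. The paper runs this directly rather than by contradiction, and at the end it uses the linear upper bound $u_0 \le c_1 h r$ on $B_{hr}(z_0)$ (dPT's optimal regularity at a free boundary point) rather than your $C^\alpha$ estimate for $v-u_0$; since you already have $v \ge c_1 r/C_H$ on all of $B_{r/2}$, that linear bound on $u_0$ gives $v-u_0 \ge \hat c\, r$ on $B_{hr}$ immediately, with no need for a H\"older seminorm of the difference.

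One conceptual point worth flagging: in your argument the mean value theorem is actually \emph{redundant}. By the time you reach your ``middle step,'' Harnack has already given $v \ge c_1 r/C_H$ throughout $B_{r/2}(z_0)$, and in particular $v(z_0) \ge c_1 r/C_H$; the MVT identity you then invoke only recovers this same bound. The paper organizes the steps differently --- it applies the MVT \emph{first}, estimating $v(x_0)$ by the average of $v$ over $D_r(x_0)\supset B_{cr}(x_0)$, then uses $v\ge u_0$ together with dPT's nondegeneracy and optimal regularity of $u_0$ to bound that average from below by $\tilde C r$, and only \emph{afterward} applies Harnack to spread the lower bound to $B_{r/2}$. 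Even in the paper's ordering one could replace the MVT step by a direct Harnack argument, so in this application Theorem\refthm{gendivmvt}functions as a clean substitute for the averaging step in the Alt--Caffarelli argument rather than an indispensable ingredient.
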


\begin{proof}
Let $v$ be a solution of the equation $Lu = 0$ in $B_r(x_0)$ with 
$v = u_0$ on $\partial B_r(x_0).$  Since $x_0$ is in the free boundary we know that $u_0$ and 
therefore $v$ is positive on a nontrivial portion of $\partial B_r(x_0).$  Then, the strong maximum
principle implies $v > 0$ in $B_r(x_0).$ 
Since $u_0$ is local minimizer we have,
$$\int_{B_r(x_0)}\left((A(x)\nabla u_0)\cdot\nabla u_0
         + \chi_{\{u_0 > 0\}}\right)
\leq \int_{B_r(x_0)}\left((A(x)\nabla v)\cdot\nabla v
         + \chi_{\{v > 0\}}\right) $$
which gives, 
\begin{alignat*}{1}
\int_{B_r(x_0)} \left( \rule[.033 in]{0in}{.12 in} (A(x)\nabla u_0) \cdot \nabla u_0 
                       - (A(x) \nabla v) \cdot \nabla v \right) 
   &\leq \int_{B_r(x_0)}\chi_{\{v > 0\}}- \int_{B_r(x_0)}\chi_{\{u_0 > 0\}} \\
   &= |B_r(x_0)| - |{\{u_0 > 0\}\cap B_r(x_0)}|\\
   &= | \{u_0 = 0\}\cap B_r(x_0) | \\
   &= | \Omega_0^c \cap B_r(x_0)| \;.
\end{alignat*}
On the other hand we claim that,
\begin{alignat*}{1}
\int_{B_r(x_0)} \left( \rule[.033 in]{0in}{.12 in} (A(x)\nabla u_0) \cdot \nabla u_0 
                     - ( A(x) \nabla v ) \cdot \nabla v \right)
   &= \int_{B_r(x_0)} \left( \rule[.033 in]{0in}{.12 in}  A(x) \nabla (u_0 - v) \right) \cdot \nabla (u_0 - v) \\ 
   &\geq \lambda \int_{B_r(x_0)} |\nabla (u_0 - v)|^2\\
   &\geq \frac{C \lambda}{r^2} \int_{B_r(x_0)} |( u_0 - v )|^2 \;.
\end{alignat*}
Thus, if we grant the claim, then we obviously have
\begin{equation}
       | \Omega_0^c \cap B_r(x_0) | \geq
        \frac{C\lambda}{r^2} \int_{B_r(x_0)}|(u_0 - v)|^2 \;.
\label{eq:L2boundbylabelset}   
\end{equation}

Turning to the proof of the claim we see immediately that
the last two inequalities in the chain of inequalities above
simply use uniform ellipticity and the Poincare inequality respectively.
Thus our claim is proved if we show the first equality.  So letting $\varphi := u_0 - v$ and 
observing that $\varphi \in W_0^{1,2}(B_r(x_0))$ we compute
\begin{alignat*}{1}
\int_{B_r(x_0)} (A(x)\nabla u_0) \cdot \nabla u_0 
   &- (A(x)\nabla v) \cdot \nabla v
    - (A(x)\nabla (u_0 - v))\cdot\nabla (u_0-v) \\ 
 &= 2\int_{B_r(x_0)} \left( \rule[.033 in]{0in}{.12 in} (A(x)\nabla u_0) \cdot \nabla v 
                - (A(x)\nabla v) \cdot \nabla v \right) \\
 &= 2\int_{B_r(x_0)}(A(x)\nabla v)\cdot \nabla(u_0 - v) \\
 &= 2\int_{B_r(x_0)}(A(x)\nabla v)\cdot \nabla(\varphi) \\
 &= 0
\end{alignat*}
\

\vspace{-.5in}

\noindent
since $Lv = 0$ in $B_{r}(x_0).$  Thus, the claim is proved.

Now using the MVT for general divergence form operators we get,
\begin{alignat*}{1}
 v(x_0)
         &= \frac{1}{|D_r(x_0)|}\int_{D_r(x_0)} v \\
         &\geq \frac{1}{|B_{Cr}(x_0)|} \int_{B_{cr}(x_0)} v \rule[.033 in]{0in}{.32 in} \\
         &=\frac{|B_{cr}(x_0)|}{|B_{Cr}(x_0)|} \rule[.033 in]{0in}{.32 in}
              \cdot \frac{1}{|B_{cr}(x_0)|} \int_{B_{cr}(x_0)} v \\
 &\geq \tilde{C} \frac{1}{|B_{cr}(x_0)|} \int_{B_{cr}(x_0)} u_0 \rule[.033 in]{0in}{.27 in}\\
 &\geq \tilde{C} r
\end{alignat*} 
where in the final inequality we have used both the nondegeneracy and the optimal regularity
of $u_0$ due to dos Prazeres and Teixeira \cite{dPT}.
Since $v$ is L-harmonic and nonnegative, the Harnack inequality tells us that
$v(y)\geq \tilde{C}r$ for all $y \in B_{r/2}(x_0).$ By the Lipschitz continuity of
$u_0$ we see that 
$u_0(y) \leq c_1 hr$ in $B_{hr}(x_0).$
By choosing $h$ to be sufficiently small we get
$$ v - u_0 \geq \hat{c}r \ \ \text{in} \ \ B_{hr}(x_0) \;.$$

Therefore by using Equation\refeqn{L2boundbylabelset}we get,
$$| \Omega_0^c \cap B_r(x_0) |
\geq \frac{c\lambda}{r^2}\int_{B_r(x_0)}|(u_0 - v)|^2
\geq \frac{c\lambda}{r^2}\int_{B_{hr}(x_0)} (\hat{c}r)^2
\geq Cr^n \;.$$
\end{proof}

By combining this last result with part (2) of Theorem\refthm{BasicFactsJa}we get the following
statement simply by definition.
\begin{corollary}[Measure Theoretic Boundary]  \label{FBisMB}
Every point of the free boundary belongs to the measure theoretic boundary of the zero set
and/or of the positivity set.
\end{corollary}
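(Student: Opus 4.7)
The plan is straightforward, since as the paper already notes, this corollary is essentially a matter of unpacking the definition of the measure theoretic boundary. Recall that the measure theoretic boundary $\partial_* E$ of a measurable set $E \subset \R^n$ consists of those points $x$ at which both
\[
\limsup_{r \to 0} \frac{\mathcal{L}^n(E \cap B_r(x))}{\mathcal{L}^n(B_r(x))} > 0
\qquad \text{and} \qquad
\limsup_{r \to 0} \frac{\mathcal{L}^n(E^c \cap B_r(x))}{\mathcal{L}^n(B_r(x))} > 0.
\]
So my goal reduces to showing that if $z_0$ is a free boundary point, then both $\{u_0 = 0\}$ and $\{u_0 > 0\}$ have strictly positive lower density at $z_0$.

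First, I would fix $z_0 \in D_0 \cap \partial \{u_0 > 0\}$ and appeal to part (2) of Theorem \ref{BasicFactsJa} to obtain a universal constant $\theta_1 > 0$ such that $\mathcal{L}^n(\{u_0 > 0\} \cap B_r(z_0)) \geq \theta_1 r^n$ for all sufficiently small $r > 0$. Dividing by $|B_r(z_0)| = \omega_n r^n$ gives a lower density of $\{u_0 > 0\}$ at $z_0$ bounded below by $\theta_1/\omega_n > 0$. Second, I would invoke Theorem \ref{PDCSFB} to obtain another universal constant $\theta_2 > 0$ so that $\mathcal{L}^n(\{u_0 = 0\} \cap B_r(z_0)) \geq \theta_2 r^n$, giving a strictly positive lower density of the zero set at $z_0$ as well.

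Since $\{u_0 > 0\}$ and $\{u_0 = 0\}$ are complementary (up to a null set, and in fact set-theoretically on any open subset of $B_1$), having positive lower density of both at $z_0$ exactly means that $z_0 \in \partial_* \{u_0 > 0\}$ and simultaneously $z_0 \in \partial_* \{u_0 = 0\}$. This actually gives more than the statement requires, since the corollary is phrased with an ``and/or.'' There is no real obstacle here; the only thing to be careful about is simply to match the density bounds produced by the two cited theorems against whichever precise definition of measure theoretic boundary one adopts, and to note that the constants $\theta_1, \theta_2$ depend only on the same parameters $(n, \lambda, \Lambda, \|u_0\|_{L^2(B_1)})$ appearing in the two input theorems, so the conclusion holds at every free boundary point in $D_0$ uniformly.
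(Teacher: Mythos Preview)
Your proposal is correct and matches the paper's approach exactly: the paper states that the corollary follows ``simply by definition'' from combining Theorem~\ref{PDCSFB} with part~(2) of Theorem~\ref{BasicFactsJa}, which is precisely what you have written out. Your observation that one in fact gets membership in the measure theoretic boundary of \emph{both} sets (not just ``and/or'') is a valid sharpening of the stated corollary.
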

\noindent
Definitions and information about the measure theoretic boundary can be found in 
a variety of references on geometric measure theory including \cite{EG} and \cite{M}.

\subsection{Application 2: A Nondegeneracy Lemma}

\

Although the previous application of the MVT gives us a new result, it does not make use
of the new properties that we have shown.  On the other hand, by making use of our
lemmas in the second section, we can give a new proof of many of the results shown
independently by dos Prazeres and Teixeira.  Indeed, our method of proof follows the
exposition of Caffarelli and Salsa's text almost exactly, and so we will state here only
the proof of the key lemma that relies on our statements of the $D_r(x_0).$  This
lemma is the analogue of Lemma 1.10 of \cite{CS}.

\begin{lemma}[Nondegeneracy Lemma]   \label{eq:nondeglem}
Let $\Omega$ be an open set with $0 \in \partial\Omega$ and 
$w\geq 0,$ $||w||_{C^{0,1}(B_2)} = \bar{\mathcal{K}},$ and $Lw = 0$ in $\Omega\cap B_2.$
Suppose $x_0 \in \Omega \cap B_1$ and 
\begin{itemize}
    \item[(i)] $w(x_0) = \sigma >0,$ and
    \item[(ii)] in the region $\{ w \geq \sigma/3 \},$ 
we have $w(x)\sim \text{dist}(x,\partial\Omega).$
\end{itemize}
Then there exist positive constants $\eta, \beta, \gamma,$ and
$\sigma_0$ which all depend on $n, \lambda, \mu,$ and $\bar{\mathcal{K}},$
such that as long as $\sigma \leq \sigma_0,$ we have
\begin{equation}
     \beta \sigma \geq \sup_{B_{\eta \sigma}(x_0)} w \geq (1+\gamma) \sigma \;.
\label{eq:OneIt}
\end{equation}
\end{lemma}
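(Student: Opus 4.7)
My proof splits into an easy upper bound and a substantive lower bound.  The upper bound
$\sup_{B_{\eta\sigma}(x_0)} w \leq (1 + \bar{\mathcal K}\eta)\sigma$
is immediate from $w(x_0) = \sigma$ together with the Lipschitz bound $\|w\|_{C^{0,1}(B_2)} = \bar{\mathcal K}$, so we may set $\beta := 1 + \bar{\mathcal K}\eta$ and focus on the lower bound.  My plan for the latter is to combine Theorem\refthm{gendivmvt}with Lemma\refthm{DensRes}and the continuity supplied by Lemma\refthm{MinimConv}: apply the MVT on a mean value set $D_r(x_0)$ of scale $r \sim \sigma$ that just barely sits inside $\Omega \cap B_2$, then use the density lemma at a point $\xi_r \in \partial D_r(x_0)$ lying arbitrarily close to $\partial \Omega$ to produce a subset of $D_r(x_0)$ of positive relative volume on which $w < \sigma/2$.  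The MVT inequality $\sigma \leq \avint_{D_r(x_0)} w$ then forces $w$ to exceed $(1+\gamma)\sigma$ elsewhere in $D_r(x_0)$.

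The first step is calibration.  Because $w(x_0) = \sigma \geq \sigma/3$, hypothesis (ii) furnishes constants $c_*, C_*$ with
$c_* \dist(\cdot, \partial\Omega) \leq w \leq C_* \dist(\cdot, \partial\Omega)$
on $\{w \geq \sigma/3\}$, so $d_0 := \dist(x_0, \partial\Omega)$ satisfies $\sigma/C_* \leq d_0 \leq \sigma/c_*$.  Set $r_* := \sup\{r > 0 : D_r(x_0) \subset \Omega \cap B_2\}$; the two-sided inclusion in Theorem\refthm{gendivmvt}gives $r_* \sim \sigma$, which forces a constraint $\sigma \leq \sigma_0$ so that $B_{Cr_*}(x_0) \subset B_2$.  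Using the $C^\alpha$ convergence $w_r \to w_{r_*}$ away from $x_0$ (Lemma\refthm{MinimConv}and Remark\refthm{WRS}) together with the noncontact-set characterization of $D_r(x_0)$, one sees that $\partial D_r(x_0)$ converges to $\partial D_{r_*}(x_0)$ in Hausdorff distance as $r \uparrow r_*$, and that $\closure{D_{r_*}(x_0)}$ must meet $\partial \Omega$ (otherwise $r_*$ would not be maximal).  Choosing $r < r_*$ close enough that some $\xi_r \in \partial D_r(x_0)$ lies within $\epsilon := \sigma/(3C_*)$ of $\partial\Omega$, hypothesis (ii) at $\xi_r$ rules out $w(\xi_r) \geq \sigma/3$ (else $w(\xi_r) \leq C_*\epsilon = \sigma/3$, a contradiction), so $w(\xi_r) < \sigma/3$; the Lipschitz bound then gives $w < \sigma/3 + \bar{\mathcal K}chr$ on $B_{chr}(\xi_r)$, and choosing $h$ universally small (depending on $\bar{\mathcal K}, c_*, C_*$, and the MVT constants) forces $w < \sigma/2$ throughout $B_{chr}(\xi_r)$.

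The last step is a weighted-averaging argument.  Lemma\refthm{DensRes}applied at $\xi_r \in \partial D_r(x_0)$ gives a universal $\tau > 0$ with $|E| \geq \tau|B_{chr}(\xi_r)|$ for $E := B_{chr}(\xi_r) \cap D_r(x_0)$; dividing by $|D_r(x_0)| \leq |B_{Cr}(x_0)|$ produces a universal lower bound $|E|/|D_r(x_0)| \geq \tau'$.  Since $D_r(x_0) \subset \Omega \cap B_2$ and $Lw = 0$ there, Theorem\refthm{gendivmvt}gives $\sigma \leq \avint_{D_r(x_0)} w$, and splitting this average over $E$ (where $w < \sigma/2$) and $D_r(x_0) \setminus E$ yields
\begin{equation*}
     \sigma \;\leq\; \tau' \cdot \tfrac{\sigma}{2} \;+\; (1 - \tau') \sup_{D_r(x_0)} w,
\end{equation*}
so $\sup_{D_r(x_0)} w \geq (1 + \gamma)\sigma$ with $\gamma := \tau'/(2(1 - \tau'))$.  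Taking $\eta := C/c_*$ forces $D_r(x_0) \subset B_{Cr_*}(x_0) \subset B_{\eta\sigma}(x_0)$ and finishes the proof.  The main technical difficulty is the Hausdorff-continuity claim for $\partial D_r(x_0)$ as $r \uparrow r_*$: the $C^\alpha$ convergence of height functions given by Lemma\refthm{MinimConv}supplies uniform approach of $w_r$ to $w_{r_*}$, but converting this into honest geometric convergence of the free boundaries relies on the quadratic nondegeneracy of $w_r$ established in \cite{BH1} (the same ingredient underlying Lemma\refthm{DensRes}).  Everything else is a routine combination of the Lipschitz bound, the MVT, and the density estimate.
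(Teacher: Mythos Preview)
Your argument is correct and follows the same overall architecture as the paper's proof: choose a critical radius at which the mean value set $D_r(x_0)$ is ``just barely'' inside a region where $w$ is well controlled, invoke Lemma\refthm{DensRes}at a boundary point of that $D_r(x_0)$ to produce a subset of definite relative volume where $w$ is well below $\sigma,$ and then run the averaging argument against the mean value identity. The difference is in how the critical radius is selected and how its defining boundary point is produced. The paper takes
\[
\rho := \sup\{\, r : D_r(x_0) \subset \{ w > \sigma/3 \}\,\}
\]
and uses Lemma\refthm{ContExp}directly to exhibit $y_0 \in \partial D_{\rho}(x_0)$ with $w(y_0) = \sigma/3,$ so hypothesis (ii) is only needed to compare $\rho$ with $\sigma.$ You instead take $r_* := \sup\{\, r : D_r(x_0) \subset \Omega \cap B_2\,\}$ and then argue, via Lemma\refthm{MinimConv}and the quadratic nondegeneracy of $w_r$ from \cite{BH1}, that $\partial D_r(x_0)$ approaches $\partial D_{r_*}(x_0)$ (which touches $\partial\Omega$) as $r \uparrow r_*,$ finally reintroducing (ii) to force $w(\xi_r) \leq \sigma/3.$ This works, but it re-proves exactly the content of Lemma\refthm{ContExp}on the fly (the ``Hausdorff continuity'' step you flag as the main difficulty is precisely what that lemma packages), and it adds an otherwise avoidable appeal to (ii) at $\xi_r.$ A minor point: your contradiction at $\xi_r$ is not quite one, since $w(\xi_r) \geq \sigma/3$ together with $w(\xi_r) \leq C_*\epsilon = \sigma/3$ yields equality rather than absurdity; taking $\epsilon$ strictly smaller (say $\epsilon = \sigma/(4C_*)$) fixes this, and in any case $w(\xi_r) \leq \sigma/3$ is already enough for the Lipschitz step that follows.
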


\begin{proof}
Define $\rho > 0$ by
\begin{equation}
    \rho := \sup \{ r \in \R \; : \; \mathit{D_r(x_{\mathrm{0}}) \subset \{ w > \sigma/\mathrm{3} \} \} } \;,
\label{eq:rhodef}
\end{equation}
where $D_r(x_0)$ is the solid mean value set given in Theorem\refthm{gendivmvt}\!\!.  Using
Lemma\!~\refthm{ContExp} there exists a $y_0 \in \partial D_{\rho}(x_0)$ with $w(y_0) = \sigma/3.$
By assumptions (i) and (ii) we know that $\rho \sim \sigma.$
By the Lipschitz continuity of $w,$ for a suitable $h > 0,$ we have $w(x) \leq 2\sigma/3$ for all
$x \in B_{h\rho}(y_0).$  Now by using Lemma\refthm{DensRes}we know that $w \leq 2\sigma/3$
in a fixed proportion of $D_{\rho}(x_0).$  By the basic properties of the mean value sets $D_r(x_0),$
we have:
\begin{equation}
     \sigma = w(x_0) = \int_{D_{\rho}(x_0)} \mkern-50mu \rule[.033 in]{.095 in}{.01 in} \ \ \ \ \ w(y) dy \;,
\label{eq:MVThappiness}
\end{equation}
but since there is a fixed proportion of $D_{\rho}(x_0)$ where $w$ is less than $2\sigma/3$ we must have
a point in $D_{\rho}(x_0)$ which exceeds $\sigma$ by some fixed amount.  Since
$D_\rho(x_0) \subset B_{C\rho}(x_0)$ with $C$ as given in Theorem\refthm{gendivmvt}\!\!, and since as 
we observed above we have $\rho \sim \sigma,$ we get the
right hand side of Equation\refeqn{OneIt}\!\!.  The left hand side of Equation\refeqn{OneIt}follows
trivially from Lipschitz continuity so we are done.
\end{proof}

\noindent
Iterating this lemma in the same fashion that Caffarelli and Salsa iterate their Lemma 1.10 leads to the
key nondegeneracy theorem for solutions to this free boundary problem.

\bibliographystyle{plain}
\bibliography{GeomMVT}
\end{document}